\newcommand{\kuc}{Dolej\v{s}\'{i}-Feistauer-Ku\v{c}era\ }
\newtheorem{theorem}{Theorem}[section]
\newtheorem{corollary}{Corollary}[theorem]
\newtheorem{lemma}[theorem]{Lemma}
\newtheorem{remark}[theorem]{Remark}
\newtheorem{definition}[theorem]{Definition}
\newtheorem{assumption}[theorem]{Assumption}
\DeclareMathOperator{\const}{const}
\DeclareMathOperator{\Id}{Id}
\newcommand{\ww}{\boldsymbol{w}}
\newcommand{\ff}{\boldsymbol{f}}
\newcommand{\nn}{\boldsymbol{n}}
\newcommand{\uu}{\boldsymbol{u}}
\newcommand{\dx}{\,\mathrm{d}x}
\newcommand{\dS}{\,\mathrm{d}\sigma}
\newcommand{\dt}{\Delta t}
\newcommand{\refs}[1]{ #1_{R}}
\newcommand{\R}{\mathbb{R}}
\newcommand{\N}{\mathbb{N}}
\newcommand{\Z}{\mathbb{Z}}
\newcommand{\OO}{\mathcal{O}}
\newcommand{\ffc}{{\boldsymbol f}}
\newcommand{\pw}{\boldsymbol \delta \boldsymbol w}
\newcommand{\HH}{\mathcal H}
\newcommand{\eps}{\varepsilon}
\newcommand{\mm}{\boldsymbol m}
\newcommand{\xx}{\boldsymbol{x}}
\begin{document}

\title{Asymptotic properties of a class of linearly implicit schemes for weakly compressible Euler equations \footnote{The present research has been realized during our Research in Pairs stay at Mathematical Research Institute of Oberwolfach. We want to thank Oberwolfach Institute for this generous support. V.K. was supported by the research project No. 20-01074S of the Czech Science Foundation.
M.L. has been  supported by  the  German Research Foundation (DFG) - Project number 233630050 - TRR 146 as well as by  TRR 165 Waves to Weather. S.N. was funded by DFG grant GRK2326.}}


\author{%
{\sc
V\'{a}clav Ku\v{c}era\thanks{Email: kucera@karlin.mff.cuni.cz}} \\[2pt]
Charles University, Faculty of Mathematics and Physics,\\ Sokolovsk\'{a} 83, Praha 8, 186\,75, Czech Republic.\\[6pt]
{\sc and}\\[6pt]
{\sc M\'{a}ria Luk\'{a}\v{c}ov\'{a}-Medvid'ov\'{a}}\thanks{Email: lukacova@mathematik.uni-mainz.de}\\[2pt]
Universit\"at Mainz, Institut f\"ur Mathematik,\\ Staudingerweg 9, 55128 Mainz, Germany.\\[6pt]
{\sc and}\\[6pt]
{\sc Sebastian Noelle}\thanks{Email: noelle@igpm.rwth-aachen.de}\\[2pt]
Institut f\"ur Geometrie und Praktische Mathematik, RWTH Aachen University,\\ Templergraben 55, 52056 Aachen, Germany.\\[6pt]
{\sc and}\\[6pt]
{\sc Jochen Sch\"{u}tz}\thanks{Email: jochen.schuetz@uhasselt.be}\\[2pt]
Vakgroep wiskunde en statistiek, Universiteit Hasselt,\\ Campus Diepenbeek, Agoralaan Gebouw D, 3590 Diepenbeek, Belgium.
}

\maketitle


\begin{abstract}
In this paper we derive and analyse a class of linearly implicit schemes which includes the one of Feistauer and Ku\v{c}era (JCP 2007) \cite{FeistauerKucera2007} as well as the class of RS-IMEX schemes \cite{NoeSch2014,KSSN16,BispenIMEXSWE,Zakerzadeh2016}. The implicit part is based on a Jacobian matrix which is evaluated at a reference state. This state can be either the solution at the old time level as in \cite{FeistauerKucera2007}, or a numerical approximation of the incompressible limit equations as in \cite{ZeifangSchuetzBeckLukacovaNoelle2019}, or possibly another state.
Subsequently, it is shown that this class of methods is asymptotically preserving under the assumption of a discrete Hilbert expansion. For a one-dimensional setting with some limitations on the reference state, the existence of a discrete Hilbert expansion is shown.


\end{abstract}


\section{Introduction}

We consider multi-dimensional systems of hyperbolic conservation laws that depend on a parameter $\varepsilon \in (0, \varepsilon_0]$, $\eps_0 > 0$ fixed,
\begin{equation}
  \partial_t \ww(\xx,t,\varepsilon)
   + \nabla \cdot \ff(\ww(\xx,t,\varepsilon),\varepsilon)
 =
  0,
\label{SN:conservation_law}
\end{equation}
which are stiff as $\varepsilon$ tends to $0$. Here $(\xx,t) \in \Omega \times \mathbb{R}_+ \subset \mathbb{R}^d \times \mathbb{R}_+ $ are the space-time variables, and
\begin{align}
  \ww: \Omega \times \mathbb{R}_+ \times (0, \varepsilon_0]
 & \to
  \mathcal{N} \subset \mathbb{R}^m
\end{align}
is the solution vector, consisting of the conserved quantities. Here $\mathcal N \subset \R^m$ is a suitable image space depending on the problem at hand, e.g., taking into account positivity of density and the like. The function
\begin{align}
  \ff: \mathcal{N} \times (0, \varepsilon_0]
 & \to
  \mathbb{R}^{m\times d}
\end{align}
is the flux matrix. We assume that for any unit vector
$\nn \in \mathbb{R}^d$ and any $\ww\in\mathcal{N}$, the Jacobian matrix
$\ff'(\ww,\varepsilon)\cdot\nn$ is real diagonalizable with eigenvalues
$\lambda_1(\ww,\varepsilon,\nn), \dots, \lambda_m(\ww,\varepsilon,\nn)$, and that for fixed $\ww$ and $\nn$,
\begin{align}
  \min\limits_{j=1, \dots m}\{|\lambda_j(\ww,\varepsilon,\nn)|
 &=
  \OO(\varepsilon^0)
\\
  \max\limits_{j=1, \dots m}\{|\lambda_j(\ww,\varepsilon,\nn)|
 &=
  \OO(\varepsilon^{-1})
\end{align}
as $\varepsilon\to0$. A classical example  is low Mach number Euler equations of gas dynamics, which is also the system that we will consider in the sequel. A key issue is the choice of time discretization. For explicit schemes, the CFL condition imposes a small time step of order $\OO(\varepsilon \Delta x)$. This might be feasible for a very fast, highly parallel solver such as \cite{MunzDG12} for some given $\eps$, but there exists a threshold on $\eps$ such that for any value  smaller than this threshold, the restriction on $\dt$ becomes too demanding.
Fully implicit schemes, on the other hand, necessitate solving large systems of nonlinear equations, whose condition number deteriorates as the parameter $\varepsilon$ tends to zero. Our focus here is on IMEX (implicit-explicit) schemes \cite{Ascher1995,Ascher1997,pareschi2005implicit,Bo07}, which attempt to split the system into a fast part (treated implicitly) and a slow part (treated explicitly).

Besides the questions of accuracy and efficiency, there is also a qualitative issue of change of type of the system of conservation laws as $\varepsilon$ tends to zero. For instance, weakly compressible solutions
become incompressible in this limit. An important question is whether  this property holds also for the numerical approximation.

The literature on numerical methods for singularly perturbed hyperbolic conservation laws is huge. The interest of this paper is on IMEX schemes for the Euler equations; those schemes necessitate a splitting of the function $\ff(\ww(\xx,t,\varepsilon),\varepsilon)$ into stiff and non-stiff parts. Possible splittings have been introduced in, e.g., 
\cite{Kl95,giraldo2010semi,CoDeKu12,HaJiLi12,BispenIMEXSWE,ArNoLuMu12,ZeifangSchuetzBeckLukacovaNoelle2019}, see also the references in the cited papers.

The linearly implicit scheme presented in \cite{FeistauerKucera2007}, building heavily on the work of \cite{DolejsiFeistauer2004}, is not of the IMEX type; it is presented for the dimensional Euler equations, so there is no (explicit) $\eps-$dependency. Our interest here is to compare asymptotic properties of the scheme \cite{FeistauerKucera2007}, which we call \kuc in this work, with the RS-IMEX scheme presented in \cite{ZeifangSchuetzBeckLukacovaNoelle2019}. Also the latter scheme is a linearly implicit scheme, see  \cite{giraldo2010semi,BispenIMEXSWE,ZeifangSchuetzBeckLukacovaNoelle2019}.

This research has been motivated through the following observation: Although different in type, numerically, both schemes perform very well in the $\eps \rightarrow 0$ limit. For the RS-IMEX scheme, a formal \emph{asymptotic consistency} analysis has been given in \cite{KS17}; no such analysis has been presented for the \kuc scheme.
Even more, the \kuc scheme is not designed to work with the nondimensionalized equations. Nevertheless, consider the convergence results shown in Fig. \ref{fig:numres}. These results show error of a travelling vortex computation for the isentropic Euler equations, see \cite[page 122]{BispenDiss} for details on the vortex. The equations are $\eps-$dependent, and so is the vortex. For $\eps \rightarrow 0$, the equations converge towards the incompressible isentropic Euler equations.
The precise definition of solver parameters are not of importance here, we refer to \cite{FeistauerKucera2007,ZKBSM17} for details. It should be mentioned that they are certainly not comparable (different linear solvers, different triangulations, different numerical fluxes, different representative mesh sizes and so on). The key observation is that both schemes, and not only the RS-IMEX, perform very well for $\eps \rightarrow 0$, which is typically a clear indicator for a scheme being AP.

\begin{figure}[h]
	\begin{center}
        \includegraphics[height=2.in]{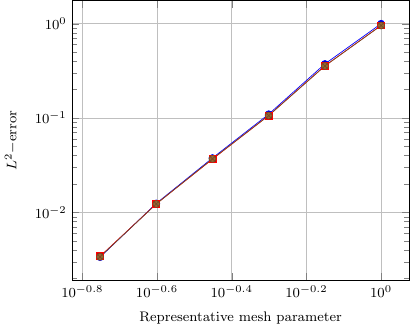}
%
		\includegraphics[height=2.in]{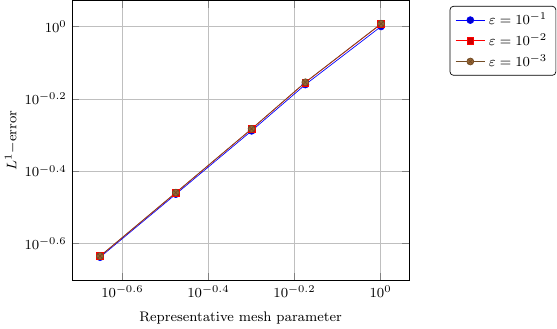}
		\tikzsetnextfilename{results_KUC}
	\caption{Numerical results for the RS-IMEX scheme (left) and the \kuc scheme (right); errors are in density and momentum. It can be seen that the errors are apparently independent of $\eps$, which is typically a good indicator for a scheme being asymptotically consistent. Errors and mesh sizes have been scaled (independently on $\eps$), so that they begin at $(1,1)$. To account for the dimensions in the \kuc scheme, error in density is scaled by $\eps^{-2}$. Please note that quantities on the left and on the right cannot be compared right away.}\label{fig:numres}
	\end{center}
\end{figure}

The contribution of the present paper comes in three parts:
\begin{itemize}
 \item First, we present a unified framework of the RS-IMEX (RS for \emph{reference solution}) and a class of linearly implicit schemes.
\end{itemize}

After having unified the schemes, we solely focus on the full Euler equations of gas dynamics given in form \eqref{SN:conservation_law}, for the ease of presentation formulated in two dimensions, with
\begin{align}
 \label{eq:euler}
 \ww :=
    \begin{pmatrix}
     \rho \\
     \rho u \\
     \rho v \\
     E
    \end{pmatrix},
 \qquad
 \ff :=
    \begin{pmatrix}
        \rho \uu\\
        \rho \uu\otimes\uu+\frac{p}{\varepsilon^2}Id\\
        \uu(E+p)
    \end{pmatrix},
\end{align}
Throughout the paper, $\eps$ denotes a reference Mach number. Here $\uu$ has been defined as the velocity vector $\uu := (u,v)$; the equations come along with the dimensionless equation of state:
\begin{equation}
E=\frac{p}{\gamma-1}+\frac{\varepsilon^2}{2}\rho |\uu|^2.
\label{V_EqState}
\end{equation}
It is known that for $\eps \rightarrow 0$, the solution $\ww$ converges towards the solution of the incompressible equations if initial and boundary data are so-called \emph{well-prepared}, see Def. \ref{V_def_well_posed}, see \cite{schochet1986compressible}; see also \cite{schochet2005mathematical} for a generalization and review of the existing results and \cite{metivier2001incompressible} for a discussion in the case of more generalized initial conditions.

\begin{itemize}
 \item Assuming the existence of an asymptotic expansion of the discretization, we show that the semi-discrete-in-time algorithm converges for $\eps \rightarrow 0$ to a consistent discretization of the incompressible Euler equations. This property has been named in \cite{Jin2012} \emph{asymptotic preserving} (AP), we refer a reader to \cite{AP_1987} where this property has been firstly studied. See also \cite{Guo_Li_Xu} for the so-called unified preserving schemes.

 \item Subsequently, we show under some restrictions that there exists an asymptotic expansion of the semi-discrete-in-time discretization.
\end{itemize}

In this paper, we work with the strong form of the equations. It is hence very important to state the following assumption:
\begin{assumption}
 Throughout the paper, we consider initial data and final times in such a way that $\ww$ remains sufficiently smooth.
\end{assumption}

The paper is organized as follows: In Sec. \ref{section:linearly-implicit-schemes} we introduce the so-called RS-IMEX schemes. We write them as a class of linearly implicit schemes and show that \kuc is a particular, and canonical, member of this class. Sec. \ref{section:asymptotic-consistency} shows that this class of schemes is asymptotically preserving assuming the existence of a discrete Hilbert expansion. Under some restrictions on the reference state, we show in Sec. \ref{section:Hilbert-expansion} that this discrete Hilbert expansion exists in one spatial dimension. Sec. \ref{sec:conout} offers conclusion and outlook.


\section{Linearly implicit schemes based on a reference state }
\label{section:linearly-implicit-schemes}

In this section, we formulate a unified framework containing both the \kuc and the RS-IMEX scheme. For simplicity of exposition, we suppress the dependence on $\eps$ and rewrite (\ref{SN:conservation_law}) as
\begin{equation}
  \partial_t \ww
   + \nabla \cdotp \ff(\ww)
 =
  0.
\end{equation}

\begin{definition}[Flux splitting]
Given a reference state $\ww_R: \Omega \times \R_{+} \rightarrow \mathcal N$, let
\begin{align}
  \widetilde{\ff}(\ww;\ww_R)
 &:=
  \ff(\ww_R) +
   \ff'(\ww_R)(\ww-\ww_R)
\label{ftilde}
\\
  \widehat{\ff}(\ww;\ww_R)
 &:=
  \ff(\ww) -
   \widetilde{\ff}(\ww,\ww_R)
\label{fhat}
\end{align}
be the stiff and non-stiff fluxes.
Note that for fixed $\ww_R$ and $\varepsilon$, the stiff flux $\widetilde{\ff}(\ww;\ww_R)$ is linear in $\ww$.
\end{definition}

The underlying idea is that the Jacobian matrix $\widetilde{\ff}^\prime$ contains all singular eigenvalues (of order $\varepsilon^{-1}$), and $\widetilde \ff$ will hence be discretized implicitly. The Jacobian $\widehat{\ff}^\prime$ contains eigenvalues of order $\varepsilon^{0}$, and $\widehat f$ will hence be discretized explicitly. We call $\widetilde{\ff}$ the stiff and $\widehat{\ff}$ the non-stiff flux.

In the following, we introduce the RS-IMEX scheme, which is based on a reference state that is a function depending on time and space:
\begin{definition}[Time-discretization based on a reference \emph{solution} (RS-IMEX)]
Let $\ww_R^n(\cdot):=\ww_R(\cdot,t^n)$ and $\ww_R^{n+1-}(\cdot):=\ww_R(\cdot,t^{n+1}-0)$. Then the RS-IMEX scheme is given by
\begin{equation}
  \frac{\ww^{n+1} - \ww^{n}}{\Delta t}
 =
  - \nabla\cdotp
   \left( \widetilde{\ff}(\ww^{n+1};\ww_R^{n+1-})
    + \widehat{\ff}(\ww^{n};\ww_R^n) \right).
\label{RS-IMEX}
\end{equation}
\end{definition}

Next we introduce a variant of the RS-IMEX scheme which is not based on a reference solution $\ww_R(t)$, but on a reference state $\overline{\ww}_R^n$ which is constant in the time interval $[t^n,t^{n+1})$ (but possibly variable in space):
\begin{definition}[IMEX time-discretization based on a reference \emph{state}]
\label{SN:def:RS-IMEX_refstate} Here we suppose that
$\ww_R(t) \equiv \overline{\ww}_R^n$
is constant in time on the interval $[t^n,t^{n+1})$. We call $\overline{\ww}_R^n: \Omega \rightarrow \mathcal N$ the reference state. Then the RS-IMEX scheme based on a reference state is given by
\begin{equation}
  \frac{\ww^{n+1} - \ww^{n}}{\Delta t}
 =
  - \nabla\cdotp
   \left( \widetilde{\ff}(\ww^{n+1};\overline{\ww}_R^n)
    + \widehat{\ff}(\ww^{n};\overline{\ww}_R^n) \right).
\label{RState-IMEX}
\end{equation}
\end{definition}
The following lemma considerably simplifies the form of the scheme (\ref{RState-IMEX}). It also provides a convenient basis for a DG space discretization:

\begin{lemma}[Linearly implicit scheme based on a reference state]
The scheme (\ref{RState-IMEX}) is equivalent to the linearly implicit scheme
\begin{align}
  \frac{\ww^{n+1} - \ww^{n}}{\Delta t}
 &=
  - \nabla\cdotp \big( \ff(\ww^n) + \ff^\prime(\overline{\ww}_R^n)
   ( \ww^{n+1}-\ww^n)  \big).
\label{V_Unified_scheme}
\end{align}
\end{lemma}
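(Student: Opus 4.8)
The plan is to prove the equivalence by direct substitution: I expand the definitions of the stiff flux $\widetilde{\ff}$ and the non-stiff flux $\widehat{\ff}$ from \eqref{ftilde}--\eqref{fhat}, evaluate them at the arguments appearing in \eqref{RState-IMEX}, add them together, and check that the sum collapses to the flux expression appearing inside the divergence in \eqref{V_Unified_scheme}. Since both schemes have identical left-hand sides and both apply $-\nabla\cdot(\,\cdot\,)$ to their flux term, it suffices to show that the two flux arguments coincide.

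First I would write out the stiff part evaluated at the new time level,
\begin{equation*}
  \widetilde{\ff}(\ww^{n+1};\overline{\ww}_R^n)
 =
  \ff(\overline{\ww}_R^n) + \ff'(\overline{\ww}_R^n)(\ww^{n+1}-\overline{\ww}_R^n),
\end{equation*}
and the non-stiff part evaluated at the old time level,
\begin{equation*}
  \widehat{\ff}(\ww^{n};\overline{\ww}_R^n)
 =
  \ff(\ww^n) - \ff(\overline{\ww}_R^n) - \ff'(\overline{\ww}_R^n)(\ww^{n}-\overline{\ww}_R^n),
\end{equation*}
where in the second line I have inserted the definition \eqref{fhat} together with \eqref{ftilde}. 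Adding the two expressions, the two occurrences of the constant term $\ff(\overline{\ww}_R^n)$ cancel, and the two linear terms combine into $\ff'(\overline{\ww}_R^n)\big[(\ww^{n+1}-\overline{\ww}_R^n)-(\ww^{n}-\overline{\ww}_R^n)\big]=\ff'(\overline{\ww}_R^n)(\ww^{n+1}-\ww^{n})$, since the constant reference state $\overline{\ww}_R^n$ drops out of the difference. This leaves exactly $\ff(\ww^n)+\ff'(\overline{\ww}_R^n)(\ww^{n+1}-\ww^n)$, which is the flux term of \eqref{V_Unified_scheme}, completing the argument.

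There is no genuine obstacle here; the statement is essentially a bookkeeping identity. The only point requiring mild care is that the splitting is constructed precisely so that the reference-state contributions enter the stiff flux at time $n+1$ and the non-stiff flux at time $n$ with opposite signs, so that upon summation both the zeroth-order term $\ff(\overline{\ww}_R^n)$ and the reference-dependent linear shift $-\ff'(\overline{\ww}_R^n)\overline{\ww}_R^n$ cancel identically. I would emphasize that this cancellation hinges on using \emph{the same} reference state $\overline{\ww}_R^n$ in both flux pieces, which is exactly the hypothesis of Definition \ref{SN:def:RS-IMEX_refstate} that the reference state be constant in time on $[t^n,t^{n+1})$; the equivalence would fail for the genuinely time-dependent reference solution of \eqref{RS-IMEX}, where $\ww_R^{n+1-}\neq\ww_R^n$ in general.
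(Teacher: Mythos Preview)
Your proof is correct and follows essentially the same direct-substitution argument as the paper: expand $\widetilde{\ff}(\ww^{n+1};\overline{\ww}_R^n)$ and $\widehat{\ff}(\ww^{n};\overline{\ww}_R^n)$ from their definitions, add, and observe that the $\ff(\overline{\ww}_R^n)$ terms cancel while the linear pieces telescope to $\ff'(\overline{\ww}_R^n)(\ww^{n+1}-\ww^n)$. Your closing remark that the cancellation relies on using the \emph{same} reference state in both flux pieces is a worthwhile observation not made explicit in the paper's proof.
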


\begin{proof}
From (\ref{ftilde}) and (\ref{fhat}),
\begin{align}
  \widetilde{\ff}&(\ww^{n+1};\overline{\ww}_R^n)
   - \widehat{\ff}(\ww^{n};\overline{\ww}_R^n)
\nonumber\\
 &=
  \Big(
   \ff(\overline{\ww}_R^n)
    + \ff'(\overline{\ww}_R^n)
     (\ww^{n+1}-\overline{\ww}_R^n)  \Big)
  + \left(
   \ff(\ww^n) -
    \widetilde{\ff}(\ww^n,\overline{\ww}_R^n)
\right)
\nonumber\\
 &=
  \Big(
   \ff(\overline{\ww}_R^n)
    + \ff'(\overline{\ww}_R^n)
     (\ww^{n+1}-\overline{\ww}_R^n)  \Big)
  +
   \ff(\ww^n) -
    \Big( \ff(\overline{\ww}_R^n)
     + \ff'(\overline{\ww}_R^n)(\ww^{n}-\overline{\ww}_R^n)
     \Big)
\nonumber\\
 &=
  \Big(
    \ff'(\overline{\ww}_R^n)
     (\ww^{n+1}-\overline{\ww}_R^n)  \Big)
  +
   \ff(\ww^n) -
    \Big(
    \ff'(\overline{\ww}_R^n)(\ww^{n}-\overline{\ww}_R^n)
     \Big)
\nonumber\\
 &=
  \ff(\ww^n) +
   \ff'(\overline{\ww}_R^n)(\ww^{n+1}-\ww^n).
\nonumber
\end{align}
\end{proof}


\begin{remark}
Taking the reference state to be the discretization at time level $n$, i.e., $\overline{\ww}_R^n = \ww^n$, then (\ref{V_Unified_scheme}) reduces to the classical linear implicit scheme
\begin{align}
  \frac{\ww^{n+1} - \ww^{n}}{\Delta t}
 &=
  - \nabla\cdotp \Big( \ff(\ww^n) + \ff^\prime(\ww^n) ( \ww^{n+1}-\ww^n)  \Big)
\label{classical-linearly-implicit}
\end{align}
If, in addition, the flux is homogeneous of degree one, i.e. $\ff(\ww) = \ff^\prime(\ww) \ww$, then
\begin{align}
  \frac{\ww^{n+1} - \ww^{n}}{\Delta t}
 &=
  - \nabla\cdotp \Big( \ff^\prime(\ww^n) \ww^{n+1} \Big).
\label{classical-homogeneous-linearly-implicit}
\end{align}
This is at the basis of the \kuc scheme, proposed in \cite{DolejsiFeistauer2004,FeistauerKucera2007} for the Euler equations of gas dynamics.
\end{remark}
\begin{remark}
In his dissertation \cite{Kaiser2018-Diss}, Kaiser observed that for the full Euler equations in multiple space dimensions, the Jacobian of the non-stiff flux, $\widehat{\ff}^\prime(\overline{\ww}_R^n)$, may have complex eigenvalues if the tangential velocities are large enough compared with the normal velocities. This was remedied in \cite{ZeifangSchuetzBeckLukacovaNoelle2019} by removing terms of order $\varepsilon^2$ from the linearized equation of state.
\end{remark}

\begin{remark}
 To simplify the notation, we will usually omit the bar at $\ww_R$ and write simply
\begin{align}
\ww_R =: (\rho_R, {\rho_R} u_R, {\rho_R} v_R, E_R)
\end{align}
whenever this does not lead to confusion.
From now on, it is assumed that $\ww_R$ is of the form given in Def. \ref{SN:def:RS-IMEX_refstate}.
\end{remark}

In Section~\ref{section:asymptotic-consistency}, we study the asymptotic consistency of the RS-IMEX scheme given in Definition~\ref{SN:def:RS-IMEX_refstate} for the two-dimensional Euler equations of gas dynamics. In Section~\ref{section:Hilbert-expansion}, we specialize to the one-dimensional case and a constant reference solution $\ww_R$ and prove the existence of an asymptotic expansion for our class of linearly implicit schemes.

\bigskip

\section{AP analysis}
\label{section:asymptotic-consistency}

Considering the Euler fluxes \eqref{eq:euler} and defining $\ww:=(w_1,w_2,w_3,w_4)^T$, one can write the two Euler fluxes in terms of $\ww$ as
\begin{equation}
\ff_1(\ww)=\begin{pmatrix}
w_2\\
\frac{3-\gamma}{2}\frac{w_2^2}{w_1} +\frac{1-\gamma}{2}\frac{w_3^2}{w_1} +\frac{\gamma-1}{\varepsilon^2}w_4\\
\frac{w_2w_3}{w_1}\\
\frac{\gamma w_2 w_4}{w_1} -\frac{\varepsilon^2(\gamma-1)}{2}\frac{w_2^3+w_2w_3^2}{w_1^2}
\end{pmatrix},\quad
\ff_2(\ww)=\begin{pmatrix}
w_3\\
\frac{w_2w_3}{w_1}\\
\frac{1-\gamma}{2}\frac{w_2^2}{w_1} +\frac{3-\gamma}{2}\frac{w_3^2}{w_1} +\frac{\gamma-1}{\varepsilon^2}w_4\\
\frac{\gamma w_3 w_4}{w_1} -\frac{\varepsilon^2(\gamma-1)}{2}\frac{w_2^2w_3+w_3^3}{w_1^2}
\end{pmatrix}.
\end{equation}
Using this notation \eqref{SN:conservation_law} reads
\begin{equation}
\partial_t\ww+\partial_x\ff_1(\ww)+\partial_y\ff_2(\ww)=0.
\end{equation}
Jacobi matrices of $\ff_1$ and $\ff_2$ with respect to $\ww$ (written in terms of the physical variables density $\rho$, momentum $\rho \uu$ and energy $E$) are given by
\begin{align}
\ff^\prime_1(\ww)=
\begin{pmatrix}
0&1&0&0\\
\frac{\gamma-3}{2}u^2+\frac{\gamma-1}{2}v^2 &(3-\gamma)u&(1-\gamma)v&\frac{\gamma-1}{\varepsilon^2}\\
-uv&v&u&0\\
-\frac{\gamma Eu}{\rho}+\varepsilon^2(\gamma-1)u(u^2+v^2),& \frac{\gamma E}{\rho}-\varepsilon^2\frac{\gamma-1}{2}(3u^2+v^2), &\varepsilon^2(1-\gamma)uv,&\gamma u
\end{pmatrix},
\label{V_Jacobians1}\\
\ff^\prime_2(\ww)=
\begin{pmatrix}
0&0&1&0\\
-uv&v&u&0\\
\frac{\gamma-1}{2}u^2+\frac{\gamma-3}{2}v^2 &(1-\gamma)u&(3-\gamma)v&\frac{\gamma-1}{\varepsilon^2}\\
-\frac{\gamma Ev}{\rho}+\varepsilon^2(\gamma-1)v(u^2+v^2),& \varepsilon^2(1-\gamma)uv,& \frac{\gamma E}{\rho}-\varepsilon^2\frac{\gamma-1}{2}(u^2+3v^2), &\gamma v
\end{pmatrix}.
\label{V_Jacobians2}
\end{align}

We fix the boundary conditions as follows:
\begin{assumption}
In the following we assume either periodic boundary conditions or slip (wall) boundary conditions for the velocity: $\uu\cdotp\nn=0$ on $\partial\Omega$, where $\nn$ is the unit outer normal to $\Omega$.
\end{assumption}



\subsection{Formal expansion of the scheme}
We make the following formal assumption on the existence of a Hilbert expansion. For the validity of this assumption, we refer the reader to Sec. \ref{section:Hilbert-expansion}.
\begin{assumption}
We assume that the physical quantities $\rho,\uu,E$ and $p$ on each time level have a formal Hilbert expansion of the form (written e.g. for $\rho^n$)
\begin{equation}
\rho^n(x)=\rho^n_{(0)}(x) +\varepsilon\rho^n_{(1)}(x) +\varepsilon^2\rho^n_{(2)}(x)+\OO(\varepsilon^3),
\label{V_rho_expansion}
\end{equation}
similarly, this is assumed for the reference state $\ww_R$.
\end{assumption}

\begin{remark}
 It is trivial that $\ww_R$ used in the RS-IMEX \cite{ZeifangSchuetzBeckLukacovaNoelle2019} has a Hilbert expansion, because it does not depend on $\eps$. For the \kuc scheme \cite{FeistauerKucera2007}, however, this is not clear, as $\ww_R$ is the solution from the previous time iterate.
\end{remark}
Substituting the Hilbert expansions into the expressions (\ref{V_Jacobians1}) and (\ref{V_Jacobians2}) gives the expansion
\begin{equation}
\ff^\prime_s(\ww)=\varepsilon^{-2}\ff^\prime_{s,(-2)}(\ww) +\varepsilon^{-1}\ff^\prime_{s,(-1)}(\ww) +\varepsilon^{0}\ff^\prime_{s,(0)}(\ww) +\OO(\varepsilon),
\label{V_Fprime}
\end{equation}
for $s = 1, 2$, where
\begin{equation}
\ff^\prime_{1,(-2)}(\ww)=
\begin{pmatrix}
0&0&0&0\\
0&0&0&(\gamma-1)\\
0&0&0&0\\
0&0&0&0
\end{pmatrix},\quad
\ff^\prime_{2,(-2)}(\ww)=
\begin{pmatrix}
0&0&0&0\\
0&0&0&0\\
0&0&0&(\gamma-1)\\
0&0&0&0
\end{pmatrix}
\end{equation}
and $\ff^\prime_{s,(-1)}(\ww)=0$ for $s=1,2$. Finally, since
\begin{equation}
\frac{1}{\rho}=\frac{1}{\rho_{(0)}} -\frac{\rho_{(1)}}{\rho_{(0)}^2}\varepsilon +\OO(\varepsilon^2)
\label{V_rho_Taylor}
\end{equation}
due to the Taylor expansion, we have
\begin{align}
\ff^\prime_{1,(0)}(\ww)=
\begin{pmatrix}
0&1&0&0\\
\frac{\gamma-3}{2}u_{(0)}^2+\frac{\gamma-1}{2}v_{(0)}^2, &(3-\gamma)u_{(0)},&(1-\gamma)v_{(0)},&0\\
-u_{(0)}v_{(0)}&v_{(0)}&u_{(0)}&0\\
-\frac{\gamma E_{(0)}u_{(0)}}{\rho_{(0)}}& \frac{\gamma E_{(0)}}{\rho_{(0)}} &0&\gamma u_{(0)}
\end{pmatrix},
\label{V_Jacobians1_0}
\\
\ff^\prime_{2,(0)}(\ww)=
\begin{pmatrix}
0&0&1&0\\
-u_{(0)}v_{(0)}&v_{(0)}&u_{(0)}&0\\
\frac{\gamma-1}{2}u_{(0)}^2+\frac{\gamma-3}{2}v_{(0)}^2, &(1-\gamma)u_{(0)},&(3-\gamma)v_{(0)},&0\\
-\frac{\gamma E_{(0)}v_{(0)}}{\rho_{(0)}}& 0& \frac{\gamma E_{(0)}}{\rho_{(0)}} &\gamma v_{(0)}
\end{pmatrix}.
\label{V_Jacobians2_0}
\end{align}

Taking all the expansions (\ref{V_rho_expansion}) -- (\ref{V_Jacobians2_0}) and substituting into the linearized problem (\ref{V_Unified_scheme}), we gather terms according to the powers of $\varepsilon$. For $\varepsilon^{-2}$ and $\varepsilon^{-1}$ we get the following lemma.

\begin{lemma}
\label{V_Lemma1}
The functions $E^{n}_{(0)}, E^{n}_{(1)}, p^{n}_{(0)}$ and $p^{n}_{(1)}$ are constant in space for every $n$.
\end{lemma}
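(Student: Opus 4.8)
The plan is to substitute the assumed Hilbert expansions into the linearly implicit scheme \eqref{V_Unified_scheme} and read off, equation by equation, the coefficients of $\varepsilon^{-2}$ and $\varepsilon^{-1}$. The first step is a reduction: expanding the dimensionless equation of state \eqref{V_EqState} and using that the kinetic term carries a factor $\varepsilon^2$ gives $p_{(0)}=(\gamma-1)E_{(0)}$ and $p_{(1)}=(\gamma-1)E_{(1)}$, so it suffices to show that $p^n_{(0)}$ and $p^n_{(1)}$ are spatially constant; the statements for $E^n_{(0)}, E^n_{(1)}$ then follow. The second structural observation is that the discrete time derivative $(\ww^{n+1}-\ww^n)/\dt$ is $\OO(\varepsilon^0)$, since $\ww$ has a regular Hilbert expansion starting at order $\varepsilon^0$. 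Consequently every coefficient of $\varepsilon^{-2}$ and of $\varepsilon^{-1}$ on the right-hand side of \eqref{V_Unified_scheme} must vanish identically.

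Next I would locate precisely where negative powers of $\varepsilon$ can arise on the right-hand side. In the explicit flux $\ff(\ww^n)$ they enter only through the pressure terms $p/\varepsilon^2$ sitting in the second component of $\ff_1$ and the third component of $\ff_2$; in the implicit term they enter only through the single singular entry $(\gamma-1)/\varepsilon^2$ in the fourth column of each Jacobian, that is, via $\ff'_{1,(-2)}$ and $\ff'_{2,(-2)}$ acting on the increment $E^{n+1}-E^n$. A point worth stressing is that these leading matrices are constant and do \emph{not} depend on the reference state $\overline{\ww}_R^n$, which is exactly why the argument applies uniformly to the whole class of schemes (both \kuc and RS-IMEX). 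Inspecting \eqref{V_Jacobians1_0}--\eqref{V_Jacobians2_0} and the flux components, the continuity and energy equations carry no singular entries and impose nothing at these orders, so all the information resides in the two momentum equations.

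Collecting the $\varepsilon^{-2}$ coefficient of the $x$-momentum equation, the explicit flux contributes $p^n_{(0)}$ while the linearized implicit term contributes $(\gamma-1)(E^{n+1}_{(0)}-E^n_{(0)})=p^{n+1}_{(0)}-p^n_{(0)}$; these add up to exactly $p^{n+1}_{(0)}$, so the vanishing of the $\varepsilon^{-2}$ part forces $\partial_x p^{n+1}_{(0)}=0$. The $y$-momentum equation gives $\partial_y p^{n+1}_{(0)}=0$ in the same way, hence $\nabla p^{n+1}_{(0)}=0$. Repeating the computation at order $\varepsilon^{-1}$, and using crucially that $\ff'_{s,(-1)}=0$ so that no $\varepsilon^{-1}$ Jacobian term enters, the identical cancellation produces $\nabla p^{n+1}_{(1)}=0$. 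Since this holds for every $n\ge 0$, the pressures $p^n_{(0)}, p^n_{(1)}$ are spatially constant at every computed level $n\ge 1$, and the remaining base level $n=0$ is covered by the well-prepared initial data of Def.~\ref{V_def_well_posed}.

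The main obstacle is really just the bookkeeping of this cancellation in the momentum equations: one must verify that the reference-state pressure coming from the explicit flux and the pressure increment coming from the linearized implicit term combine precisely to the new-level pressure, so that the constraint is purely on $p^{n+1}$ and no assumption on $p^n$ is needed. The only other thing to check is that the $\OO(\varepsilon^0)$ nonlinear momentum fluxes ($\rho u^2$, $\rho uv$, $\rho v^2$) and the $\OO(\varepsilon^0)$ rows of the Jacobians genuinely do not feed into orders $\varepsilon^{-2}$ or $\varepsilon^{-1}$, which is immediate from the explicit expansions \eqref{V_Jacobians1_0}--\eqref{V_Jacobians2_0}.
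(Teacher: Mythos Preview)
Your argument is correct and essentially identical to the paper's: both collect the $\varepsilon^{-2}$ and $\varepsilon^{-1}$ coefficients of the momentum equations in \eqref{V_Unified_scheme} to obtain $\nabla\big(p_{(0)}^n+(\gamma-1)(E_{(0)}^{n+1}-E_{(0)}^n)\big)=0$ (and the analogous relation at order $\varepsilon^{-1}$), then combine with the expanded equation of state $p_{(k)}^n=(\gamma-1)E_{(k)}^n$, $k=0,1$, to deduce $\nabla E_{(k)}^{n+1}=0$. The only minor discrepancy is your appeal to well-prepared initial data for the base level $n=0$: the paper does not assume Definition~\ref{V_def_well_posed} at this stage, and its proof (like yours) in fact only yields spatial constancy for $n\ge 1$, so that invocation is unnecessary here.
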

\proof
By gathering the terms of order $\varepsilon^{-2}$ and $\varepsilon^{-1}$ from (\ref{V_Unified_scheme}), we obtain
\begin{align}
\nabla\big(p_{(0)}^n +(\gamma-1)(E_{(0)}^{n+1}-E_{(0)}^n)\big)&=0,
\label{V_minustwo-order-eq1a}
\\
\nabla\big(p_{(1)}^n +(\gamma-1)(E_{(1)}^{n+1}-E_{(1)}^n)\big)&=0.
\label{V_minustwo-order-eq1b}
\end{align}
Taking the $\varepsilon^{0}$ and $\varepsilon^{1}$ terms from the equation of state (\ref{V_EqState}) at time level $n$ gives
\begin{equation}
E_{(0)}^n=\frac{p_{(0)}^n}{\gamma-1},\quad
E_{(1)}^n=\frac{p_{(1)}^n}{\gamma-1}.
\label{V_minustwo-order-eq2}
\end{equation}
Substituting into (\ref{V_minustwo-order-eq1a}) and (\ref{V_minustwo-order-eq1b}) gives $\nabla E_{(0)}^{n+1}=\nabla E_{(1)}^{n+1}=0$, hence $E_{(0)}^{n+1}$ and $E_{(1)}^{n+1}$ are constant in space for every $n$. Equation (\ref{V_minustwo-order-eq2}) implies the same for $p_{(0)}^{n+1}$ and $p_{(1)}^{n+1}$.

\qed

Collecting the $\varepsilon^{0}$ terms of the mass equation from (\ref{V_Unified_scheme}) gives
\begin{equation}
\frac{\rho^{n+1}_{(0)}-\rho^{n}_{(0)}}{\Delta t} +\nabla\cdotp(\rho^{n+1}_{(0)}\uu^{n+1}_{(0)})=0, \label{V_zero-order-eq1}
\end{equation}
Similarly, from the momentum equation we get
\begin{align}
&\frac{\rho^{n+1}_{(0)}u^{n+1}_{(0)}-\rho^{n}_{(0)}u^{n}_{(0)}}{\Delta t} +\partial_x\bigg(\rho_{(0)}^n(u_{(0)}^n)^2+p_{(2)}^n \nonumber\\
&\quad+\big(\tfrac{\gamma-3}{2}u_{R,(0)}^2 +\tfrac{\gamma-1}{2}v_{R,(0)}^2\big)(\rho_{(0)}^{n+1}-\rho_{(0)}^n)
+(3-\gamma)u_{R,(0)}(\rho_{(0)}^{n+1}u_{(0)}^{n+1}-\rho_{(0)}^nu_{(0)}^{n})
\nonumber\\
&\quad+(1-\gamma)v_{R,(0)}(\rho_{(0)}^{n+1}v_{(0)}^{n+1}-\rho_{(0)}^nv_{(0)}^{n}) +(\gamma-1)(E_{(2)}^{n+1}-E_{(2)}^n)\bigg)\nonumber\\
&\quad+\partial_y\bigg(\rho_{(0)}^n u_{(0)}^n v_{(0)}^n -u_{R,(0)}v_{R,(0)}(\rho_{(0)}^{n+1}-\rho_{(0)}^n) +v_{R,(0)}(\rho_{(0)}^{n+1}u_{(0)}^{n+1}-\rho_{(0)}^nu_{(0)}^{n})
\nonumber\\
&\quad+u_{R,(0)}(\rho_{(0)}^{n+1}v_{(0)}^{n+1}-\rho_{(0)}^nv_{(0)}^{n})
\bigg)=0
\label{V_zero-order-eq2}
\end{align}
and
\begin{align}
&\frac{\rho^{n+1}_{(0)}v^{n+1}_{(0)}-\rho^{n}_{(0)}v^{n}_{(0)}}{\Delta t} +\partial_x\bigg(\rho_{(0)}^n u_{(0)}^n v_{(0)}^n -u_{R,(0)}v_{R,(0)}(\rho_{(0)}^{n+1}-\rho_{(0)}^n)\nonumber \\ &\quad+v_{R,(0)}(\rho_{(0)}^{n+1}u_{(0)}^{n+1}-\rho_{(0)}^nu_{(0)}^{n})
+u_{R,(0)}(\rho_{(0)}^{n+1}v_{(0)}^{n+1}-\rho_{(0)}^nv_{(0)}^{n})
\bigg)\nonumber \\ &\quad+\partial_y\bigg(\rho_{(0)}^n(v_{(0)}^n)^2+p_{(2)}^n +\big(\tfrac{\gamma-1}{2}u_{R,(0)}^2 +\tfrac{\gamma-3}{2}v_{R,(0)}^2\big)(\rho_{(0)}^{n+1}-\rho_{(0)}^n)
\nonumber\\
&\quad+(1-\gamma)u_{R,(0)}(\rho_{(0)}^{n+1}u_{(0)}^{n+1}-\rho_{(0)}^nu_{(0)}^{n})
+(3-\gamma)v_{R,(0)}(\rho_{(0)}^{n+1}v_{(0)}^{n+1}-\rho_{(0)}^nv_{(0)}^{n})\nonumber \\ 
&\quad+(\gamma-1)(E_{(2)}^{n+1}-E_{(2)}^n)
\bigg)=0.
\label{V_zero-order-eq3}
\end{align}
Finally from the energy equation we get
\begin{align}
&\frac{E^{n+1}_{(0)}-E^{n}_{(0)}}{\Delta t} +\nabla\cdotp\bigg( \big(E^n_{(0)}+p^n_{(0)}\big)\uu^n_{(0)} -\gamma\frac{E_{R,(0)}\uu_{R,(0)}}{\rho_{R,(0)}} (\rho^{n+1}_{(0)}-\rho^{n}_{(0)})\nonumber \\ &\quad+\gamma\frac{E_{R,(0)}}{\rho_{R,(0)}} (\rho^{n+1}_{(0)}\uu^{n+1}_{(0)}-\rho^{n}_{(0)}\uu^{n}_{(0)})
+\gamma \uu_{R,(0)}(E^{n+1}_{(0)}-E^{n}_{(0)})\bigg)=0. \label{V_zero-order-eq4}
\end{align}

We note that if we assume periodic or slip boundary conditions e.g. for $\uu^n$, then the same boundary conditions hold for the individual terms in its Hilbert expansion. This can be seen (e.g. in the case of slip boundary conditions) by taking the limit $\varepsilon\to 0$ in the boundary condition $\uu^n\cdotp\nn=0$, which immediately gives $\uu^n_{(0)}\cdotp\nn=0$. Then we have $0=\varepsilon\uu^n_{(1)}\cdotp\nn +\varepsilon^2\uu^n_{(2)}\cdotp\nn+\OO(\varepsilon^3)$ which we can divide {by} $\varepsilon$ and take $\varepsilon\to 0$ to obtain  $\uu^n_{(1)}\cdotp\nn=0$. Similarly  $\uu^n_{(2)}\cdotp\nn=0$, etc.

\begin{lemma}
\label{V_Lemma1a}
Assuming either slip boundary conditions for $\uu_R$ and $\uu^n$ for all $n$ or periodic boundary conditions, the functions $E^{n}_{(0)}$ and $p^{n}_{(0)}$ are constant in space and independent of $n$.
\end{lemma}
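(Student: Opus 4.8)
The plan is to exploit the zeroth-order energy balance \eqref{V_zero-order-eq4} together with the boundary conditions. Since Lemma \ref{V_Lemma1} already establishes that $E^n_{(0)}$ and $p^n_{(0)}$ are constant in space for every $n$, the only new content is their independence of $n$, i.e. $E^{n+1}_{(0)}=E^n_{(0)}$. To extract this, I would integrate \eqref{V_zero-order-eq4} over the whole domain $\Omega$ and apply the divergence theorem to the flux-divergence term, turning it into a boundary integral over $\partial\Omega$.

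The heart of the argument is then to show that this boundary integral vanishes. Denoting by $\boldsymbol G$ the vector field inside the divergence in \eqref{V_zero-order-eq4} and computing $\int_{\partial\Omega}\boldsymbol G\cdotp\nn\dS$, I would inspect its four contributions term by term. The first term carries the factor $\uu^n_{(0)}\cdotp\nn$; the second and fourth carry $\uu_{R,(0)}\cdotp\nn$; the third, after distributing the dot product, carries $\uu^{n+1}_{(0)}\cdotp\nn$ and $\uu^n_{(0)}\cdotp\nn$. Under slip boundary conditions each of these normal components vanishes on $\partial\Omega$ --- for $\uu^n_{(0)}$ and $\uu^{n+1}_{(0)}$ by the discussion preceding the lemma (which propagates $\uu\cdotp\nn=0$ to every order of the Hilbert expansion), and for $\uu_{R,(0)}$ by the assumed slip condition on the reference velocity. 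The possibly space-dependent scalar coefficients, e.g. $\gamma E_{R,(0)}/\rho_{R,(0)}$, are harmless, since in each term they merely multiply a normal velocity component that is already zero. Under periodic boundary conditions the same integral vanishes by periodicity of all quantities.

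With the boundary integral gone, integration of \eqref{V_zero-order-eq4} leaves $\int_\Omega \tfrac{1}{\dt}\big(E^{n+1}_{(0)}-E^n_{(0)}\big)\dx = 0$. By Lemma \ref{V_Lemma1} the integrand is constant in space, so it equals $\tfrac{1}{\dt}\big(E^{n+1}_{(0)}-E^n_{(0)}\big)\,|\Omega|$, which forces $E^{n+1}_{(0)}=E^n_{(0)}$; thus $E^n_{(0)}$ is independent of $n$. Finally, the equation-of-state relation $E^n_{(0)}=p^n_{(0)}/(\gamma-1)$ from \eqref{V_minustwo-order-eq2} transfers this conclusion to $p^n_{(0)}$. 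I expect the only delicate point to be the careful bookkeeping of the boundary integral: one must verify that every summand in $\boldsymbol G\cdotp\nn$ is proportional to a normal velocity that the slip (or periodic) condition annihilates, and in particular that the slip condition is imposed on both $\uu^n$ and the reference state $\uu_R$ --- which is exactly the hypothesis of the lemma.
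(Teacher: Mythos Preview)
Your proposal is correct and follows essentially the same approach as the paper: integrate the zeroth-order energy equation \eqref{V_zero-order-eq4} over $\Omega$, use Lemma~\ref{V_Lemma1} to pull the constant $E^{n+1}_{(0)}-E^n_{(0)}$ out of the integral, and observe that every summand in the boundary integral carries a factor $\uu^n_{(0)}\cdotp\nn$, $\uu^{n+1}_{(0)}\cdotp\nn$, or $\uu_{R,(0)}\cdotp\nn$, each of which vanishes under slip (or the whole integral vanishes by periodicity). Your term-by-term bookkeeping is slightly more explicit than the paper's, but the argument is identical.
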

\proof
We integrate (\ref{V_zero-order-eq4}) over $\Omega$ and apply Green's theorem. Since $E^{n}_{(0)}$ and $E^{n+1}_{(0)}$ are constant by Lemma \ref{V_Lemma1}, we get
\begin{equation}
|\Omega|\frac{E^{n+1}_{(0)}-E^{n}_{(0)}}{\Delta t} +\int_{\partial\Omega}\mathcal{E}\cdotp\nn\dS=0,
\label{V_zero-order-eq3a}
\end{equation}
where $\mathcal{E}$ corresponds to the terms under the divergence symbol in (\ref{V_zero-order-eq4}). Since each of these terms contains either $\uu_{R,(0)}, \uu^n_{(0)}$ or $\uu^{n+1}_{(0)}$, all of which have zero normal component on $\partial\Omega$, the whole boundary integral in (\ref{V_zero-order-eq3a}) vanishes. This is the case of slip-boundary conditions, for periodic boundary conditions, the boundary integral vanishes due to spatial periodicity of all the terms. Altogether, (\ref{V_zero-order-eq3a}) then implies $E^{n+1}_{(0)}=E^{n}_{(0)}$ and (\ref{V_minustwo-order-eq2}) implies $p^{n+1}_{(0)}=p^{n}_{(0)}$.

\qed


\subsection{Asymptotic preserving property}
In this section we prove that the zero order variables from the Hilbert expansion satisfy the incompressible Euler equations. First, we start with the incompressibility.

\begin{lemma}
\label{V_Lemma2}
Assume either slip boundary conditions for $\uu_R$ and $\uu^n$ for all $n$ or periodic boundary conditions. Let $\rho^{n}_{(0)}$ and $\rho_{R,(0)}$ be constant in space and let $\nabla\cdotp \uu^{n}_{(0)} =\nabla \cdotp \uu_{R,(0)}=0$. Then $\rho^{n+1}_{(0)}=\rho^{n}_{(0)}$, i.e. $\rho^{n+1}_{(0)}$ is also constant in space, and $\nabla\cdotp \uu^{n+1}_{(0)}=0$.
\end{lemma}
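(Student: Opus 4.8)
The plan is to treat this as the induction step of the asymptotic-preserving argument, the workhorse being the leading-order mass balance \eqref{V_zero-order-eq1}. First I would extract the cheap consequence: integrating \eqref{V_zero-order-eq1} over $\Omega$ and applying Green's theorem turns the divergence term into $\int_{\partial\Omega}\rho^{n+1}_{(0)}\uu^{n+1}_{(0)}\cdotp\nn\dS$, which vanishes under either boundary condition (using, as in the paragraph preceding Lemma \ref{V_Lemma1a}, that each Hilbert coefficient of $\uu^{n+1}$ inherits the slip/periodic condition). Since $\rho^n_{(0)}$ is spatially constant by hypothesis, this yields $\int_\Omega\rho^{n+1}_{(0)}\dx=\rho^n_{(0)}|\Omega|$, i.e.\ the spatial mean is exactly preserved.

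The core step is to upgrade ``mean preserved'' to ``pointwise constant''. I would test \eqref{V_zero-order-eq1} with $\rho^{n+1}_{(0)}$ and integrate by parts twice; all boundary contributions vanish as above, and after inserting the mean identity one is left with
\[
 \frac{1}{\dt}\int_\Omega\big(\rho^{n+1}_{(0)}-\rho^n_{(0)}\big)^2\dx
 = -\frac{1}{2}\int_\Omega\big(\rho^{n+1}_{(0)}\big)^2\,\nabla\cdotp\uu^{n+1}_{(0)}\dx.
\]
Consequently, once $\nabla\cdotp\uu^{n+1}_{(0)}=0$ is known the left-hand side vanishes and $\rho^{n+1}_{(0)}\equiv\rho^n_{(0)}$ follows; conversely, feeding a spatially constant $\rho^{n+1}_{(0)}$ back into \eqref{V_zero-order-eq1} and integrating gives $\nabla\cdotp\uu^{n+1}_{(0)}=0$ at once. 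So the two assertions are equivalent modulo this identity, and the real content is to produce the divergence-free condition for the new velocity.

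That divergence-free condition is where I expect the main obstacle, and it is the reason the hypotheses on the reference state ($\rho_{R,(0)}$ constant, $\nabla\cdotp\uu_{R,(0)}=0$) are imposed. I would derive it from the leading-order momentum equations \eqref{V_zero-order-eq2}--\eqref{V_zero-order-eq3}: under the stated hypotheses, and using that $E^n_{(0)},p^n_{(0)}$ are already constant by Lemmas \ref{V_Lemma1}--\ref{V_Lemma1a}, these should collapse to an implicit, constant-density incompressible momentum update whose only spatially varying ``pressure'' unknown is $P^{n+1}:=p^n_{(2)}+(\gamma-1)\big(E^{n+1}_{(2)}-E^n_{(2)}\big)$. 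Taking the divergence of this update and eliminating $\nabla\cdotp(\rho^{n+1}_{(0)}\uu^{n+1}_{(0)})$ through \eqref{V_zero-order-eq1} yields an elliptic problem for $P^{n+1}$; under the slip/periodic conditions this operator is invertible on mean-zero data, so $P^{n+1}$ plays the role of a Lagrange multiplier enforcing $\nabla\cdotp\uu^{n+1}_{(0)}=0$ (a discrete Leray projection).

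The delicate points I would watch are exactly in this last step: checking that the reference-weighted terms carrying the factors $u_{R,(0)},v_{R,(0)}$ from $\ff'_{s,(0)}$ really assemble into $\nabla P^{n+1}$ plus a divergence-compatible remainder, and that no spatially varying source (from the $n$-level convective fluxes, or from $E^n_{(2)}$) spoils solvability. If one prefers to bypass the Poisson argument, the alternative is to establish the constancy of $\rho^{n+1}_{(0)}$ first (e.g.\ by a maximum-principle reading of the implicit transport equation once the velocity is divergence free) and then read $\nabla\cdotp\uu^{n+1}_{(0)}=0$ off \eqref{V_zero-order-eq1}; either way the genuine difficulty sits in the momentum equations together with the reference-state hypotheses, while the mass equation and the boundary conditions supply the clean closing identities above.
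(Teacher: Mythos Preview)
Your proposal has a genuine gap: you are looking for the closure in the wrong equation. The Leray-projection idea cannot work here because $P^{n+1}=p^n_{(2)}+(\gamma-1)(E^{n+1}_{(2)}-E^n_{(2)})$ is \emph{not} a free Lagrange multiplier that you may choose to enforce $\nabla\cdotp\uu^{n+1}_{(0)}=0$; it is itself determined by the scheme (through the energy equation for $E^{n+1}_{(2)}$). Taking the divergence of \eqref{V_zero-order-eq2}--\eqref{V_zero-order-eq3} and eliminating $\nabla\cdotp(\rho^{n+1}_{(0)}\uu^{n+1}_{(0)})$ via \eqref{V_zero-order-eq1} will indeed give you an elliptic relation for $P^{n+1}$, but this only tells you what $P^{n+1}$ \emph{is}; it does not force the divergence of the new velocity to vanish. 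Your ``alternative'' route is circular in the same way: you propose to read off constancy of $\rho^{n+1}_{(0)}$ from an implicit transport equation ``once the velocity is divergence free'', which is precisely what you are trying to prove.

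The missing idea is that the closure comes from the \emph{energy} equation \eqref{V_zero-order-eq4}, not the momentum equations. Lemma~\ref{V_Lemma1a} already gives $E^{n+1}_{(0)}=E^n_{(0)}$ (constant), so the time-difference term drops, and under the hypotheses $\nabla\rho^n_{(0)}=0$, $\nabla\cdotp\uu^n_{(0)}=\nabla\cdotp\uu_{R,(0)}=0$, $\rho_{R,(0)}=\const$, the remaining terms of \eqref{V_zero-order-eq4} collapse to
\[
 -\uu_{R,(0)}\cdotp\nabla\big(\rho^{n+1}_{(0)}-\rho^n_{(0)}\big)+\nabla\cdotp\big(\rho^{n+1}_{(0)}\uu^{n+1}_{(0)}\big)=0.
\]
Substituting this into the mass equation \eqref{V_zero-order-eq1} yields the stationary transport equation
\[
 \tfrac{1}{\dt}\,\varrho+\uu_{R,(0)}\cdotp\nabla\varrho=0,\qquad \varrho:=\rho^{n+1}_{(0)}-\rho^n_{(0)},
\]
with the \emph{known} divergence-free field $\uu_{R,(0)}$ as transport velocity (not $\uu^{n+1}_{(0)}$). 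Multiplying by $\varrho$, integrating, and using $\nabla\cdotp\uu_{R,(0)}=0$ together with the boundary conditions gives $\int_\Omega\varrho^2\dx=0$, hence $\rho^{n+1}_{(0)}=\rho^n_{(0)}$. Feeding this back into the displayed identity (or directly into \eqref{V_zero-order-eq1}) then gives $\nabla\cdotp\uu^{n+1}_{(0)}=0$. Your $L^2$ testing of the mass equation and the mean-preservation observation are correct but unnecessary once you have this route.
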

\proof
We can simplify the energy equation (\ref{V_zero-order-eq4}) using Lemma \ref{V_Lemma1a} and the assumptions  $\nabla\rho^{n}_{(0)}=0$ and $\nabla\cdotp \uu^{n}_{(0)} =\nabla\cdotp \uu_{R,(0)}=0$ to obtain
\begin{equation}
-\uu_{R,{(0)}}\cdotp\nabla(\rho^{n+1}_{(0)}-\rho^{n}_{(0)}) +\nabla\cdotp(\rho^{n+1}_{(0)}\uu^{n+1}_{(0)})=0.
\label{V_Lemma2_eq1}
\end{equation}
Substituting this equality into the mass equation (\ref{V_zero-order-eq1}) gives us
\begin{equation}
\frac{\rho^{n+1}_{(0)}-\rho^{n}_{(0)}}{\Delta t} + \uu_{R,(0)}\cdotp\nabla\big(\rho^{n+1}_{(0)}-\rho^{n}_{(0)}\big)=0.
\label{V_Lemma2_eq2}
\end{equation}
Denoting for simplicity $\varrho:=\rho^{n+1}_{(0)}-\rho^{n}_{(0)}$, we write (\ref{V_Lemma2_eq2}) as
\begin{equation}
\tfrac{1}{\Delta t}\varrho +\uu_{R,(0)}\cdotp\nabla\varrho=0.
\label{V_Lemma2_eq3}
\end{equation}
We wish to prove that $\varrho=0$, i.e., that $\rho^{n+1}_{(0)}=\rho^{n}_{(0)}$. To this end, we multiply (\ref{V_Lemma2_eq3}) by $\varrho$ and integrate over $\Omega$:
\begin{equation}
\frac{1}{\Delta t}\int_\Omega\varrho^2\dx +\int_\Omega\uu_{R,(0)}\cdotp\nabla\varrho\,\varrho\dx=0.
\label{V_Lemma2_eq4}
\end{equation}
We apply Green's theorem to the second integral to obtain
\begin{equation}
\int_\Omega\uu_{R,(0)}\cdotp\nabla\varrho\,\varrho\dx =\underbrace{\int_{\partial\Omega} \uu_{R,(0)}\cdotp\nn\varrho^2\dS}_{=0} -\underbrace{\int_\Omega\nabla\cdotp \uu_{R,(0)}\varrho^2\dx}_{=0} -\int_\Omega \uu_{R,(0)}\cdotp\nabla\varrho\,\varrho\dx,
\label{V_Lemma2_eq5}
\end{equation}
where the first and second right-hand side terms are zero due to the boundary conditions and the divergence-free assumption on $ \uu_{R,(0)}$, respectively, while the last term equals the left-hand side. Therefore, (\ref{V_Lemma2_eq5}) gives us $\int_\Omega \uu_{R,(0)}\cdotp\nabla\varrho\,\varrho\dx=0$, which together with (\ref{V_Lemma2_eq4}) implies
\begin{equation}
\frac{1}{\Delta t}\int_\Omega\varrho^2\dx=0\quad\Longrightarrow\quad \varrho=0 \text{ a.e. in } \Omega\quad\Longrightarrow\quad \rho^{n+1}_{(0)}=\rho^{n}_{(0)}.
\label{V_Lemma2_eq6}
\end{equation}
Thus we have obtained the first statement of the Lemma.

Finally, since we now know that $\nabla \rho^{n+1}_{(0)}=\nabla \rho^{n}_{(0)}=0$, equation (\ref{V_Lemma2_eq1}) simplifies to $\nabla\cdotp \uu^{n+1}_{(0)} =0$, which completes the proof.

\qed

Now we prove that the lowest order terms in the Hilbert expansion satisfy the semi-discrete incompressible Euler equations, implicitly discretized in time. One then has an $\OO(\Delta t)$ consistency error which comes from the time discretization and a consistency error arising due to the linearization of the fluxes. As we shall mention later, for the \kuc and RS-IMEX schemes this consistency error is of the order $\OO(\Delta t^2)$.

\begin{theorem}
\label{V_Theorem1}
Let the initial condition satisfy $\nabla\cdotp \uu^{0}_{(0)}=0$ and $\rho^{0}_{(0)}$ being constant in space. Let the reference solution satisfy $\nabla\cdotp \uu_{R,(0)}^n=0$ and $\rho^{n}_{R,(0)}$ being constant in space for all $n$. Assume either slip boundary conditions for $\uu_R^n$ and $\uu^n$ for all $n$ or periodic boundary conditions. Then for each $n$, the pair $\left(\uu^{n+1}_{(0)},p^{n+1}_{(2)}/\rho_{(0)}^{n+1}\right)$ solves the implicit semi-discrete incompressible Euler equations
\begin{equation}
\begin{split}
\frac{\uu^{n+1}_{(0)}-\uu^{n}_{(0)}}{\Delta t} +\nabla\cdotp\Big(\uu_{(0)}^{n+1}\otimes \uu_{(0)}^{n+1}\Big) +\nabla\frac{p_{(2)}^{n+1}}{\rho_{(0)}^{n+1}} &=\mathcal{E}^{n+1},\\
\nabla\cdotp \uu^{n+1}_{(0)}&=0,
\label{V_Theorem1_1}
\end{split}
\end{equation}
where $\mathcal{E}^{n+1}$ is a consistency error term satisfying
\begin{equation}
|\mathcal{E}^{n+1}|\leq C\|\uu^{n+1}_{(0)}-\uu^{n}_{(0)}\|_{W^{1,\infty}} \Big(\|\uu^{n+1}_{(0)}-\uu^{n}_{(0)}\|_{W^{1,\infty}} +\|\uu^{n}_{(0)}-\uu^{n}_{R,(0)}\|_{W^{1,\infty}}\Big),
\label{V_Theorem1_2}
\end{equation}
where $C$ depends only on $\gamma$.
\end{theorem}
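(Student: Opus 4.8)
The plan is to deduce the result from Lemma~\ref{V_Lemma2} by induction and then to extract the momentum equation by a careful rearrangement of the zero-order terms. First I would run an induction on $n$: the hypotheses of Lemma~\ref{V_Lemma2} are that $\rho^n_{(0)}$ and $\rho_{R,(0)}$ are constant in space and that $\nabla\cdotp\uu^n_{(0)}=\nabla\cdotp\uu_{R,(0)}=0$. The base case holds by the assumption on the initial data ($\nabla\cdotp\uu^0_{(0)}=0$, $\rho^0_{(0)}$ constant) and on the reference solution ($\nabla\cdotp\uu^n_{R,(0)}=0$, $\rho^n_{R,(0)}$ constant for all $n$). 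Lemma~\ref{V_Lemma2} then returns $\rho^{n+1}_{(0)}=\rho^n_{(0)}$ and $\nabla\cdotp\uu^{n+1}_{(0)}=0$, which both closes the induction and yields the second line of \eqref{V_Theorem1_1}. In particular $\rho^n_{(0)}$ is a single fixed constant $\rho_{(0)}$, independent of space and of $n$.

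Next I would simplify the zero-order momentum equations \eqref{V_zero-order-eq2}--\eqref{V_zero-order-eq3}. Because $\rho^{n+1}_{(0)}=\rho^n_{(0)}=\rho_{(0)}$, every term carrying the factor $(\rho^{n+1}_{(0)}-\rho^n_{(0)})$ vanishes, while each remaining combination such as $\rho^{n+1}_{(0)}u^{n+1}_{(0)}-\rho^n_{(0)}u^n_{(0)}$ collapses to $\rho_{(0)}(u^{n+1}_{(0)}-u^n_{(0)})$; dividing through by $\rho_{(0)}$ eliminates the density. To recover the pressure gradient of \eqref{V_Theorem1_1}, I would substitute the order-$\varepsilon^2$ part of the equation of state \eqref{V_EqState}, namely $(\gamma-1)E^n_{(2)}=p^n_{(2)}+\tfrac{\gamma-1}{2}\rho_{(0)}|\uu^n_{(0)}|^2$, into the grouping $p^n_{(2)}+(\gamma-1)(E^{n+1}_{(2)}-E^n_{(2)})$ that sits under the divergence. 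This rewrites that grouping as $p^{n+1}_{(2)}+\tfrac{\gamma-1}{2}\rho_{(0)}\big(|\uu^{n+1}_{(0)}|^2-|\uu^n_{(0)}|^2\big)$, so that after division by $\rho_{(0)}$ the term $\nabla\big(p^{n+1}_{(2)}/\rho_{(0)}\big)$ is exposed and the leftover $|\uu|^2$-piece is pushed into the error.

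I would then define $\mathcal{E}^{n+1}$ as the difference between the target left-hand side in \eqref{V_Theorem1_1} and the simplified zero-order momentum equation, the latter being identically zero. The discrete time derivative and the pressure gradient cancel, leaving $\mathcal{E}^{n+1}=(\partial_x A+\partial_y B,\ \partial_x B+\partial_y \widetilde A)$, where, writing $\delta u:=u^{n+1}_{(0)}-u^n_{(0)}$ and $\delta v:=v^{n+1}_{(0)}-v^n_{(0)}$, the $x$-component involves
\begin{align*}
A &= (u^{n+1}_{(0)})^2-(u^n_{(0)})^2-\tfrac{\gamma-1}{2}\big(|\uu^{n+1}_{(0)}|^2-|\uu^n_{(0)}|^2\big)-(3-\gamma)u_{R,(0)}\,\delta u-(1-\gamma)v_{R,(0)}\,\delta v, \\
B &= u^{n+1}_{(0)}v^{n+1}_{(0)}-u^n_{(0)}v^n_{(0)}-v_{R,(0)}\,\delta u-u_{R,(0)}\,\delta v,
\end{align*}
and $\widetilde A$ is $A$ with $u$ and $v$ interchanged (the $y$-component being obtained by the same exchange).

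The crux, and the step I expect to be the main obstacle, is to show that $A$, $B$ and $\widetilde A$ are genuinely \emph{quadratic} in the differences $\uu^{n+1}_{(0)}-\uu^n_{(0)}$ and $\uu^n_{(0)}-\uu_{R,(0)}$; a careless estimate would leave them only first order, giving no smallness and failing to match \eqref{V_Theorem1_2}. The needed cancellation follows from the elementary identities $a^2-b^2=(a-b)(a+b)$ and $a+b-2c=(a-b)+2(b-c)$: collecting the $u$-terms of $A$ gives $\tfrac{3-\gamma}{2}\,\delta u\big(\delta u+2(u^n_{(0)}-u_{R,(0)})\big)$, the $v$-terms give $-\tfrac{\gamma-1}{2}\,\delta v\big(\delta v+2(v^n_{(0)}-v_{R,(0)})\big)$, while $B=\delta u\,\delta v+\delta u\,(v^n_{(0)}-v_{R,(0)})+\delta v\,(u^n_{(0)}-u_{R,(0)})$. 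Thus $A,B,\widetilde A$ are sums of products of one factor from $\{\delta u,\delta v\}$ with one factor from $\{\delta u,\delta v,\ u^n_{(0)}-u_{R,(0)},\ v^n_{(0)}-v_{R,(0)}\}$. Applying the product rule to $\partial_x A,\partial_y B,\dots$ together with the elementary bound $\|\partial(fg)\|_{L^\infty}\le C\|f\|_{W^{1,\infty}}\|g\|_{W^{1,\infty}}$ then yields \eqref{V_Theorem1_2}, with $C$ built only from $\tfrac{3-\gamma}{2}$ and $\tfrac{\gamma-1}{2}$ and hence depending only on $\gamma$; here $\uu_{R,(0)}=\uu^n_{R,(0)}$ is the reference state on the $n$-th interval, matching $\uu^n_{(0)}-\uu^n_{R,(0)}$ in the stated bound.
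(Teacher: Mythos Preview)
Your proposal is correct and follows essentially the same route as the paper: invoke Lemma~\ref{V_Lemma2} (by induction) to get $\rho^{n+1}_{(0)}=\rho^n_{(0)}$ constant and $\nabla\cdot\uu^{n+1}_{(0)}=0$, divide the zero-order momentum equations by the constant density, use the $\varepsilon^2$-layer of the equation of state to convert the $E_{(2)}$ terms into $p^{n+1}_{(2)}$ plus a kinetic-energy difference, and then recognize the remainder as a sum of products of one factor from $\{\delta u,\delta v\}$ with one factor from $\{\delta u,\delta v,\,u^n_{(0)}-u_{R,(0)},\,v^n_{(0)}-v_{R,(0)}\}$. The only cosmetic difference is that the paper groups the error as $E_1+E_2$ with factors such as $u^{n+1}_{(0)}+u^n_{(0)}-2u_{R,(0)}$ and $u^{n+1}_{(0)}-u_{R,(0)}$, whereas you split these one step further into $\delta u+2(u^n_{(0)}-u_{R,(0)})$ etc.; the resulting bound \eqref{V_Theorem1_2} is identical.
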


\proof 
Lemma \ref{V_Lemma2} implies that $u_{(0)}^{n+1}$ is divergence-free. To show the first part of (\ref{V_Theorem1_1}), we will work with equation (\ref{V_zero-order-eq2}) for the $x$-component of momentum, equation (\ref{V_zero-order-eq3}) can be treated similarly. Since $\rho_{(0)}^n=\rho_{(0)}^{n+1}$ is constant in space due to Lemma \ref{V_Lemma2}, we can divide (\ref{V_zero-order-eq2}) by density and simplify:
\begin{align}
&\frac{u^{n+1}_{(0)}-u^{n}_{(0)}}{\Delta t} +\partial_x\bigg((u_{(0)}^n)^2+\frac{p_{(2)}^n}{\rho_{(0)}^n} +(3-\gamma)u_{R,(0)}(u_{(0)}^{n+1}-u_{(0)}^{n})\nonumber\\
&\quad+(1-\gamma)v_{R,(0)}(v_{(0)}^{n+1}-v_{(0)}^{n}) +\frac{\gamma-1}{\rho_{(0)}^n}(E_{(2)}^{n+1}-E_{(2)}^n)\bigg)\nonumber\\
&\quad+\partial_y\Big(u_{(0)}^n v_{(0)}^n +v_{R,(0)}(u_{(0)}^{n+1}-u_{(0)}^{n})
+u_{R,(0)}(v_{(0)}^{n+1}-v_{(0)}^{n})
\Big)=0.
\label{V_Theorem1_eq1}
\end{align}
The pressure and energy terms from (\ref{V_Theorem1_eq1}) can be expressed using the equation of state (\ref{V_EqState}), namely by considering its $\OO(\varepsilon^2)$ terms
\begin{align}
 E_{(2)} = \frac{p_{(2)}}{\gamma - 1} + \frac1 2 {\rho_{(0)} |\uu_{(0)}|^2}.
\end{align}
We obtain
\begin{align}
&\frac{1}{\rho_{(0)}^n}\bigg(p_{(2)}^n +(\gamma-1)(E_{(2)}^{n+1}-E_{(2)}^n)\bigg) \nonumber\\
&\quad=\frac{1}{\rho_{(0)}^n}\bigg(p_{(2)}^n +(\gamma-1)\bigg(\frac{p_{(2)}^{n+1}}{\gamma-1} +\frac{1}{2}\rho_{(0)}^{n+1}|\uu_{(0)}^{n+1}|^2 -\frac{p_{(2)}^{n}}{\gamma-1} -\frac{1}{2}\rho_{(0)}^{n}|\uu_{(0)}^{n}|^2 \bigg)\bigg)\nonumber\\
&\quad=\frac{p_{(2)}^{n+1}}{\rho_{(0)}^{n+1}} +\frac{\gamma-1}{2}\Big(|\uu_{(0)}^{n+1}|^2 -|\uu_{(0)}^{n}|^2 \Big).
\label{V_Theorem1_eq2}
\end{align}
Substituting (\ref{V_Theorem1_eq2}) into (\ref{V_Theorem1_eq1}) leads to
\begin{align}
&\frac{u^{n+1}_{(0)}-u^{n}_{(0)}}{\Delta t} +\partial_x\bigg((u_{(0)}^n)^2 +(3-\gamma)u_{R,(0)}(u_{(0)}^{n+1}-u_{(0)}^{n})\nonumber\\
&\quad+(1-\gamma)v_{R,(0)}(v_{(0)}^{n+1}-v_{(0)}^{n}) +\frac{p_{(2)}^{n+1}}{\rho_{(0)}^{n+1}} +\frac{\gamma-1}{2}\Big(|\uu_{(0)}^{n+1}|^2 -|\uu_{(0)}^{n}|^2 \Big)\bigg)\nonumber\\
&\quad +\partial_y\Big(u_{(0)}^n v_{(0)}^n +v_{R,(0)}(u_{(0)}^{n+1}-u_{(0)}^{n})
+u_{R,(0)}(v_{(0)}^{n+1}-v_{(0)}^{n})
\Big)=0.
\label{V_Theorem1_eq3}
\end{align}
We now collect all the terms under the $\partial_x$ symbol in (\ref{V_Theorem1_eq3}) which contain the $x$-component of $\uu$ or $\uu_R$:
\begin{align}
(u_{(0)}^n&)^2 +(3-\gamma)u_{R,(0)}(u_{(0)}^{n+1}-u_{(0)}^{n})
+\tfrac{\gamma-1}{2}\big((u_{(0)}^{n+1})^2 -(u_{(0)}^{n})^2 \big)\nonumber\\
&=(u_{(0)}^{n+1})^2 -(u_{(0)}^{n+1})^2 +(u_{(0)}^{n})^2 +(3-\gamma)u_{R,(0)}(u_{(0)}^{n+1}-u_{(0)}^{n})
+\tfrac{\gamma-1}{2}\big((u_{(0)}^{n+1})^2 -(u_{(0)}^{n})^2 \big)\nonumber\\
&=(u_{(0)}^{n+1})^2 +\tfrac{\gamma-3}{2}\big(u_{(0)}^{n+1} -u_{(0)}^{n}\big)\big(u_{(0)}^{n+1} +u_{(0)}^{n}-2u_{R,(0)}\big).
\label{V_Theorem1_eq4}
\end{align}
Similarly, we collect all the terms under the $\partial_x$ symbol in (\ref{V_Theorem1_eq3}) which contain the $y$-component of $\uu$ or $\uu_R$:
\begin{align}
(1-\gamma)v_{R,(0)}&(v_{(0)}^{n+1}-v_{(0)}^{n}) +\tfrac{\gamma-1}{2}\big((v_{(0)}^{n+1})^2 -(v_{(0)}^{n})^2 \big)=\tfrac{\gamma-1}{2}(v_{(0)}^{n+1}-v_{(0)}^{n}) \big(v_{(0)}^{n+1} +v_{(0)}^{n} -2v_{R,(0)} \big).
\label{V_Theorem1_eq5}
\end{align}
Now we take all the terms under the $\partial_y$ symbol in (\ref{V_Theorem1_eq3}):
\begin{align}
u_{(0)}^n& v_{(0)}^n +v_{R,(0)}(u_{(0)}^{n+1}-u_{(0)}^{n})
+u_{R,(0)}(v_{(0)}^{n+1}-v_{(0)}^{n})\nonumber\\
&=u_{(0)}^{n+1}v_{(0)}^{n+1} -u_{(0)}^{n+1}v_{(0)}^{n+1}+u_{(0)}^{n}v_{(0)}^{n} +v_{R,(0)}(u_{(0)}^{n+1}-u_{(0)}^{n})+u_{R,(0)}(v_{(0)}^{n+1}-v_{(0)}^{n})
\nonumber\\
&=u_{(0)}^{n+1}v_{(0)}^{n+1}
-(v_{(0)}^{n+1}-v_{(0)}^n)(u_{(0)}^{n+1}-u_{R,(0)}) -(u_{(0)}^{n+1}-u_{(0)}^n)(v_{(0)}^n-v_{R,(0)}).
\label{V_Theorem1_eq6}
\end{align}
Altogether, if we substitute (\ref{V_Theorem1_eq4})--(\ref{V_Theorem1_eq6}) into the momentum equation (\ref{V_Theorem1_eq3}) we get
\begin{equation}
\frac{u^{n+1}_{(0)}-u^{n}_{(0)}}{\Delta t} +\partial_x\bigg((u_{(0)}^{n+1})^2  +\frac{p_{(2)}^{n+1}}{\rho_{(0)}^{n+1}}\bigg) +\partial_y\Big(u_{(0)}^{n+1}v_{(0)}^{n+1}\Big)
=E_1+E_2,
\label{V_Theorem1_eq7}
\end{equation}
This equation is simply the backward Euler discretization of the equation for the $x$-component of velocity from the incompressible Euler equations with error terms
\begin{align}
E_1&=-\partial_x\Big(\tfrac{\gamma-3}{2}\big(u_{(0)}^{n+1} -u_{(0)}^{n}\big)\big(u_{(0)}^{n+1} +u_{(0)}^{n}-2u_{R,(0)}\big) +\tfrac{\gamma-1}{2}(v_{(0)}^{n+1}-v_{(0)}^{n}) \big(v_{(0)}^{n+1} +v_{(0)}^{n} -2v_{R,(0)} \big)\Big),
\nonumber\\
E_2&=\partial_y\Big((v_{(0)}^{n+1}-v_{(0)}^n)(u_{(0)}^{n+1}-u_{R,(0)}) +(u_{(0)}^{n+1}-u_{(0)}^n)(v_{(0)}^n-v_{R,(0)})\Big).
\label{V_Theorem1_eq8}
\end{align}
It is now straightforward to estimate these terms as in (\ref{V_Theorem1_2}). The second momentum equation (\ref{V_zero-order-eq3}) can be treated similarly.

\qed

If we denote $\delta^{n}:= \|\uu^{n}_{(0)}-\uu^{n}_{R,(0)}\|_{W^{1,\infty}}$, the consistency error estimate (\ref{V_Theorem1_2}) is of the order
\begin{equation}
|\mathcal{E}^{n+1}|\leq C\Delta t(\Delta t+\delta^{n}).
\label{V_AP_eq1}
\end{equation}
The \kuc scheme is based on the choice $\uu^{n}_{R,(0)}=\uu^{n}_{(0)}$, hence $\delta^{n}=0$ and the consistency error satisfies
\begin{equation}
\mathcal{E}^{n+1}=\OO(\Delta t^2).
\label{V_AP_eq2}
\end{equation}
On the other hand, for the RS-IMEX scheme, we take $\uu^{n}_{R,(0)}=\uu_{\mathrm{ref}}(t_{n})$, hence $\delta^{n}=\OO(\Delta t)$ and again $\mathcal{E}^{n+1}=\OO(\Delta t^2)$.
We note that in both cases the consistency error is of the second order which is one order higher than the error of approximating the time derivative in (\ref{V_Theorem1_1}). We call this property \emph{superconsistency} of the flux approximation.

We note that this phenomenon might explain the excellent performance of the \kuc scheme for computing steady state solutions, where the time derivative (approximated by a first order difference) is close to zero and the consistency error is of second order due to (\ref{V_AP_eq2}).

\subsection{Well prepared initial data}
Taking into account the results from the previous sections, we do now assume that our initial conditions are \emph{well-prepared}, physically speaking, this means that those initial data do not contain acoustics. Since acoustics are $\OO(\varepsilon)$ perturbations of density, pressure and divergence of velocity, this assumption amounts to having only $\OO(\varepsilon^2)$ perturbations in these quantities.

\begin{definition}
\label{V_def_well_posed}
We say that the initial data are \emph{well prepared} if
\begin{align}
\rho^0= \const+\OO(\varepsilon^2), \qquad p^0=\const+\OO(\varepsilon^2), \qquad \nabla\cdotp \uu^0=\OO(\varepsilon^2).
\end{align}
\end{definition}

We note that if the mentioned quantities possess Hilbert expansions, Definition \ref{V_def_well_posed} amounts to $\rho_{(1)}^0=p_{(1)}^0=\nabla\cdotp\uu_{(1)}^0=0$. Now we prove that if the initial data are well prepared then also $\rho^n={\const}+\OO(\varepsilon^2)$, $p^n={\const}+\OO(\varepsilon^2)$ and $\nabla\cdotp \uu^n=\OO(\varepsilon^2)$ for all $n$.

\begin{theorem}
\label{V_Theorem2} Let the assumptions of Theorem \ref{V_Theorem1} hold. Assume also that the initial data are well prepared in the sense of Definition \ref{V_def_well_posed} and that $\rho_{R,(1)}^n=0$ for all $n$. Then $\rho_{(1)}^n=p_{(1)}^n =\nabla\cdotp\uu_{(1)}^n=0$ for all $n$.
\end{theorem}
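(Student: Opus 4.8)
The plan is to argue by induction on $n$, showing that the well-prepared structure $\rho^n_{(1)}=p^n_{(1)}=\nabla\cdotp\uu^n_{(1)}=0$ propagates from one time level to the next. The base case $n=0$ is precisely Definition~\ref{V_def_well_posed} together with the observation that well-preparedness is equivalent to the vanishing of the first-order Hilbert coefficients. For the induction step I would assume the three identities at level $n$ and derive them at level $n+1$, using throughout the facts already established in the $\varepsilon^0$ analysis: by Lemma~\ref{V_Lemma1a} and Theorem~\ref{V_Theorem1}, $\rho_{(0)}$, $E_{(0)}$ and $p_{(0)}$ are constant in space and in $n$, and $\nabla\cdotp\uu^n_{(0)}=\nabla\cdotp\uu_{R,(0)}=0$; by Lemma~\ref{V_Lemma1}, $E^n_{(1)}$ and $p^n_{(1)}$ are constant in space for every $n$.

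The crucial new ingredient is to collect the $\OO(\varepsilon^1)$ terms of the scheme \eqref{V_Unified_scheme}, exactly one order higher than the $\varepsilon^0$ balances \eqref{V_zero-order-eq1} and \eqref{V_zero-order-eq4}. For the mass equation (whose flux is purely implicit) this is immediate and gives
\begin{equation*}
\frac{\rho^{n+1}_{(1)}-\rho^{n}_{(1)}}{\Delta t}+\nabla\cdotp\big(\rho^{n+1}_{(0)}\uu^{n+1}_{(1)}+\rho^{n+1}_{(1)}\uu^{n+1}_{(0)}\big)=0 .
\end{equation*}
For the energy equation one has to expand the linearized energy flux to first order, which requires the $\OO(\varepsilon)$ coefficient of the Jacobian expansion \eqref{V_Fprime} together with the next term of the Taylor expansion \eqref{V_rho_Taylor} of $1/\rho$. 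This bookkeeping is the main obstacle; everything else is a rerun of the $\varepsilon^0$ arguments. The hypothesis $\rho^n_{R,(1)}=0$ enters exactly here: it is what makes the reference-state contributions at order $\varepsilon^1$ collapse, so that the first-order energy balance retains the same transport structure as \eqref{V_Lemma2_eq1}, up to terms that are divergence-free or integrate to zero on $\partial\Omega$. For the two concrete choices of reference state this is automatic, since for RS-IMEX $\ww_R$ is $\varepsilon$-independent while for \kuc $\ww_R=\ww^n$, whose first-order coefficients vanish by the induction hypothesis.

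With the first-order energy balance in hand I would conclude in three steps, mirroring Lemmas~\ref{V_Lemma1a} and~\ref{V_Lemma2}. First, integrating the first-order energy equation over $\Omega$ and applying Green's theorem: since $E_{(1)}$ is constant in space and every term of the first-order energy flux carries a factor of $\uu_{(0)}$, $\uu_{(1)}$ or $\uu_{R,(0)}$, all of which have vanishing normal trace (or are periodic), the boundary integral vanishes and one obtains $E^{n+1}_{(1)}=E^{n}_{(1)}$; with $E^{n}_{(1)}=0$ from the induction hypothesis and the equation of state \eqref{V_minustwo-order-eq2} this yields $p^{n+1}_{(1)}=E^{n+1}_{(1)}=0$. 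Second, knowing $E^{n+1}_{(1)}=E^{n}_{(1)}$ removes the time-derivative term, and using $\rho^n_{(1)}=0$, $\nabla\cdotp\uu^n_{(1)}=0$, the divergence-free velocities and the constancy of $\rho_{(0)}$, the first-order energy balance reduces to $-\uu_{R,(0)}\cdotp\nabla\rho^{n+1}_{(1)}+\nabla\cdotp\big(\rho^{n+1}_{(0)}\uu^{n+1}_{(1)}+\rho^{n+1}_{(1)}\uu^{n+1}_{(0)}\big)=0$; subtracting it from the first-order mass equation produces the pure transport equation $\tfrac{1}{\Delta t}\rho^{n+1}_{(1)}+\uu_{R,(0)}\cdotp\nabla\rho^{n+1}_{(1)}=0$, where $\rho^n_{(1)}=0$ is used. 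The energy method of \eqref{V_Lemma2_eq4}--\eqref{V_Lemma2_eq6}, that is, multiplying by $\rho^{n+1}_{(1)}$, integrating, and using $\nabla\cdotp\uu_{R,(0)}=0$ with the boundary condition, then forces $\rho^{n+1}_{(1)}=0$. Third, feeding $\rho^{n+1}_{(1)}=0$ back into the first-order mass equation and using that $\rho^{n+1}_{(0)}>0$ is constant gives $\nabla\cdotp\uu^{n+1}_{(1)}=0$, which closes the induction.

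I expect the only genuinely laborious point to be the explicit first-order expansion of the energy flux and the verification that, under $\rho^n_{R,(1)}=0$, its reference-state part contributes only divergence-free terms or terms with vanishing boundary flux; once this is granted, the three steps above are verbatim copies of the zero-order analysis.
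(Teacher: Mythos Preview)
Your proposal is correct and follows essentially the same route as the paper: collect the $\varepsilon^1$ mass and energy balances, integrate the energy balance over $\Omega$ to obtain $E^{n+1}_{(1)}=E^n_{(1)}$ (hence $p^{n}_{(1)}\equiv0$), then use the induction hypothesis and the constancy/divergence-free facts from the $\varepsilon^0$ level to reduce the energy balance to the analogue of \eqref{V_Lemma2_eq1}, combine with the mass balance to get a transport equation for $\rho^{n+1}_{(1)}$, and conclude via the energy argument \eqref{V_Lemma2_eq4}--\eqref{V_Lemma2_eq6}. The only cosmetic difference is that the paper keeps the difference $\rho^{n+1}_{(1)}-\rho^n_{(1)}$ in the transport step (mirroring Lemma~\ref{V_Lemma2} verbatim) before invoking $\rho^n_{(1)}=0$, whereas you insert the induction hypothesis one step earlier; also note that the boundary integral involves $\uu_{R,(1)}$ as well, which likewise has vanishing normal trace.
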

\proof
We collect the $\varepsilon^1$ terms of the mass equation from scheme (\ref{V_Unified_scheme}):
\begin{equation}
\frac{\rho^{n+1}_{(1)}-\rho^{n}_{(1)}}{\Delta t} +\nabla\cdotp\big(\rho^{n+1}_{(0)}\uu^{n+1}_{(1)} +\rho^{n+1}_{(1)}\uu^{n+1}_{(0)}\big)=0.
\label{V_first-order-eq1}
\end{equation}
Similarly, we collect the $\varepsilon^1$ terms of the energy equation from scheme (\ref{V_Unified_scheme}), taking into account (\ref{V_rho_Taylor}):
\begin{align}
&\frac{E^{n+1}_{(1)}-E^{n}_{(1)}}{\Delta t} +\nabla\cdotp\bigg( \big(E^n_{(0)}+p^n_{(0)}\big)\uu^n_{(1)} +\big(E^n_{(1)}+p^n_{(1)}\big)\uu^n_{(0)} -\gamma\frac{E_{R,(0)}\uu_{R,(0)}}{\rho_{R,(0)}} (\rho^{n+1}_{(1)}-\rho^{n}_{(1)})\nonumber\\
&\quad-\gamma\Big(\frac{E_{R,(0)}\uu_{R,(1)}+E_{R,(1)}\uu_{R,(0)}}{\rho_{R,(0)}} -\frac{E_{R,(0)}\uu_{R,(0)}(\rho_{R,(1)})^2}{\rho_{R,(0)}}\Big) (\rho^{n+1}_{(0)}-\rho^{n}_{(0)})
\nonumber\\
&\quad+\gamma\frac{E_{R,(0)}}{\rho_{R,(0)}} \big(\rho^{n+1}_{(0)}\uu^{n+1}_{(1)} +\rho^{n+1}_{(1)}\uu^{n+1}_{(0)} -\rho^{n}_{(0)}\uu^{n}_{(1)} -\rho^{n}_{(1)}\uu^{n}_{(0)}\big)
\nonumber\\
&\quad+\gamma\Big(\frac{E_{R,(1)}}{\rho_{R,(0)}} -\frac{E_{R,(0)}\rho_{R,(1)}}{(\rho_{R,(0)})^2}\Big) \big(\rho^{n+1}_{(0)}\uu^{n+1}_{(0)} -\rho^{n}_{(0)}\uu^{n}_{(0)})
\nonumber\\
&\quad+\gamma \uu_{R,(0)}(E^{n+1}_{(1)}-E^{n}_{(1)})+\gamma \uu_{R,(1)}(E^{n+1}_{(0)}-E^{n}_{(0)})\bigg)=0. \label{V_first-order-eq4}
\end{align}
Now we proceed similarly as in the proofs of Lemmas \ref{V_Lemma1a} and \ref{V_Lemma2}. We integrate (\ref{V_first-order-eq4}) over $\Omega$ and apply Green's theorem. Similarly as in (\ref{V_zero-order-eq3a}), the resulting boundary terms are equal to zero due to boundary conditions. This gives us $E^{n+1}_{(1)}=E^{n}_{(1)}$ for all $n$. Consequently also $p^{n+1}_{(1)}=p^{n}_{(1)}$ for all $n$, by taking the $\varepsilon^1$ terms in (\ref{V_EqState}). This implies that $p^{n}_{(1)}=p^{0}_{(1)}=0$ for all $n$.

We proceed by induction and assume that the assumptions of the theorem hold on time level $t_n$. Gathering the assumptions and all previous results, we have that $E^{n}_{(0)}, E^{n}_{(1)}, p^{n}_{(0)}$ and $p^{n}_{(1)}$ are independent of $x$ and $n$, $\nabla\cdotp\uu^{n}_{(0)}= \nabla\cdotp\uu^{n+1}_{(0)} =\nabla\cdotp\uu^{n}_{(1)} =\nabla\cdotp\uu_{R,(0)}=0$ and $\rho^{n+1}_{(0)}=\rho^{n}_{(0)}$. These results allow us to simplify (\ref{V_first-order-eq4}) to
\begin{equation}
-\uu_{R,(0)}\nabla\cdotp(\rho^{n+1}_{(1)}-\rho^{n}_{(1)}) +\nabla\cdotp\big(\rho^{n+1}_{(0)}\uu^{n+1}_{(1)} +\rho^{n+1}_{(1)}\uu^{n+1}_{(0)} \big)=0.
\label{V_first-order-eq4a}
\end{equation}
The second term can be substituted into the mass equation (\ref{V_first-order-eq1}) to obtain
\begin{equation}
\frac{\rho^{n+1}_{(1)}-\rho^{n}_{(1)}}{\Delta t} +\uu_{R,(0)}\nabla\cdotp(\rho^{n+1}_{(1)}-\rho^{n}_{(1)})=0.
\label{V_first-order-eq1a}
\end{equation}
Now we can proceed similarly as in the proof of Lemma \ref{V_Lemma2} -- we multiply (\ref{V_first-order-eq1a}) by $\rho^{n+1}_{(1)}-\rho^{n}_{(1)}$ and apply Green's theorem. All resulting integral terms vanish either due to boundary conditions or since $\nabla\cdotp\uu_{R,(0)}=0$. This implies that $\rho^{n+1}_{(1)}-\rho^{n}_{(1)}=0$, hence, by induction $\rho^{n+1}_{(1)} =\rho^{0}_{(1)}=0$. Using this fact in (\ref{V_first-order-eq1}) implies $\nabla\cdotp\uu_{(1)}^{n+1}=0$. This completes the proof.

\qed

\section{Existence of the Hilbert expansion}
\label{section:Hilbert-expansion}

It is not clear whether the Hilbert expansion at the new time level $n+1$ used in Sec. \ref{section:linearly-implicit-schemes} exists. In most AP proofs this is assumed, and only a few authors, see e.g., \cite{Bispen2017222,BispenDiss} explicitly show it. In this work, we will, for a restricted, yet instructive, case show that this Hilbert expansion exists.
The following assumptions on domain and solutions are used:
\begin{assumption}\label{as:1}
 Assume that boundary conditions are periodic, and that the domain $\Omega \subset \R$. For the sake of simplicity, take $\Omega = [-\pi,\pi]$. (This last assumption is of course not crucial.)
 Assume that all the occurring quantities are sufficiently smooth. More precisely, we assume that the components of $\ww$ are in $H^{\infty}$, with
 \begin{align*}
  H^{\infty} := \left\{\varphi \in L^2(\Omega) \ | \ \sum_{k \in \Z} \left(1+|k|^2\right)^p |\widehat \varphi(k)|^2 < \infty, \  \forall p \in \N\right\}.
 \end{align*}
 $\widehat \varphi(k)$ denote the Fourier coefficients of $\varphi$.
 Note that the severe smoothness condition can be somewhat relaxed.
\end{assumption}

To simplify the analysis, we make the following assumption:
\begin{assumption}\label{as:2}
 Assume that $\refs \ww^n$ is constant in space. (Note that in the sequel, we will omit the superscript $n$ and simply write $\ww$.)
\end{assumption}

\begin{remark}
 It is clear that this is not the most general case; still, it is a very important step towards the full AP analysis.
 %
\end{remark}
Because of the assumptions made above, we can consider the slightly different, yet equivalent formulation of \eqref{V_Unified_scheme}, namely
\begin{align}
 \label{eq:deltaform}
 {\pw^{n+1}} + \dt \partial_x \left(\ffc'(\refs \ww) \pw^{n+1} \right) + \HH^{n} = 0,
\end{align}
where we have defined
\begin{align}
  \pw := \ww - \refs \ww.
\end{align}
$\HH^{n}$ covers all the terms that only depend on time level $n$.
For later reference, we denote
\begin{align*}
 \HH^n =: \left( \delta\rho^*, \delta (\rho \uu)^*, \delta E^*\right)^T.
\end{align*}
The inductive proof of the existence of the Hilbert expansion heavily relies on the fact that 'known' quantities at time level $n$ are assumed to have a Hilbert expansion. Then, also $\HH^n$ has a Hilbert expansion:
\begin{lemma}
 Assume that $\pw^{n}$ possesses a Hilbert expansion. Then the terms collected in $\HH^{n}$ have a Hilbert expansion.
\end{lemma}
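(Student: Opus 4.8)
The plan is first to make the term $\HH^{n}$ completely explicit, then to recognise that it involves only the \emph{non-stiff} flux $\widehat{\ff}$, and finally to invoke closure of the class of Hilbert-expandable functions under the elementary operations that build $\widehat{\ff}$.

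To obtain $\HH^{n}$ explicitly I would start from the one-dimensional scheme \eqref{V_Unified_scheme}, multiply by $\dt$ and substitute $\ww^{n+1}-\ww^{n}=\pw^{n+1}-\pw^{n}$ (legitimate because $\refs{\ww}$ is fixed on the step). Collecting all terms carrying the index $n$ gives
\begin{align*}
 \HH^{n}=-\pw^{n}-\dt\,\partial_x\!\left(\ff'(\refs{\ww})\,\pw^{n}\right)+\dt\,\partial_x \ff(\ww^{n}).
\end{align*}
Using the definitions \eqref{ftilde}--\eqref{fhat} of the splitting, the last two terms combine: since $\ff(\ww^{n})-\ff'(\refs{\ww})\pw^{n}=\widehat{\ff}(\ww^{n};\refs{\ww})+\ff(\refs{\ww})$ and $\ff(\refs{\ww})$ is constant in space by Assumption \ref{as:2}, its $x$-derivative drops out and
\begin{align*}
 \HH^{n}=-\pw^{n}+\dt\,\partial_x\,\widehat{\ff}(\ww^{n};\refs{\ww}).
\end{align*}
The only work left is then to show that $\widehat{\ff}(\ww^{n};\refs{\ww})$ is Hilbert-expandable, since $-\pw^{n}$ is so by hypothesis and $\partial_x$ plainly preserves the expansion (differentiate the coefficient functions, which remain in $H^{\infty}$ by Assumption \ref{as:1}).

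The decisive point — and the step I expect to be the only genuine obstacle — is that $\widehat{\ff}$ carries \emph{no} negative powers of $\eps$, even though both $\ff(\ww^{n})$ and $\ff'(\refs{\ww})\pw^{n}$ separately contain terms of order $\eps^{-2}$. The entire singular part of the Euler flux is the pressure contribution $\tfrac{\gamma-1}{\eps^{2}}w_4$ in the momentum component, which is \emph{linear} in the conserved variables; its linearisation about $\refs{\ww}$ therefore reproduces it exactly, so the second-order remainder $\widehat{\ff}=\ff-\widetilde{\ff}$ cancels it identically. Consequently $\widehat{\ff}(\ww^{n};\refs{\ww})$ reduces, componentwise, to the genuinely nonlinear (and at worst $\eps^{2}$-weighted) parts of the flux, i.e.\ functions assembled from $\ww^{n}$ and $\refs{\ww}$ by addition, multiplication and division by the density. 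As $\ww^{n}=\refs{\ww}+\pw^{n}$ inherits a Hilbert expansion with strictly positive leading density, the reciprocals expand by \eqref{V_rho_Taylor}, and the class of Hilbert-expandable functions is closed under these operations. Hence $\widehat{\ff}(\ww^{n};\refs{\ww})$, and therefore $\HH^{n}$, possesses a Hilbert expansion with non-negative powers of $\eps$.

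I would close by cross-checking this against the first, explicit form of $\HH^{n}$: in the momentum component the two order-$\eps^{-2}$ contributions are $\dt\tfrac{\gamma-1}{\eps^{2}}\partial_x E^{n}$, coming from $\dt\,\partial_x\ff(\ww^{n})$, and $-\dt\tfrac{\gamma-1}{\eps^{2}}\partial_x(E^{n}-E_R)$, coming from $-\dt\,\partial_x(\ff'(\refs{\ww})\pw^{n})$, whose sum equals $\dt\tfrac{\gamma-1}{\eps^{2}}\partial_x E_R=0$ by the spatial constancy of $\refs{\ww}$. All remaining entries of $\ff$ and of $\ff'(\refs{\ww})\pw^{n}$ are already of order $\eps^{0}$, so no further singular terms arise. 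This componentwise cancellation is merely the shadow of the structural identity above, and notably needs only Assumption \ref{as:2}, not the constancy of $E_{(0)}^{n},E_{(1)}^{n}$ from Lemma \ref{V_Lemma1}.
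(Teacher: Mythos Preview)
The paper states this lemma without proof; it is treated as evident from the structure of the scheme, and the narrative simply moves on to the explicit form \eqref{eq:fdw} of $\ff'(\refs\ww)\pw^{n+1}$. Your argument therefore fills in a step the authors chose to leave implicit.

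Your proof is correct. The derivation of the explicit form $\HH^{n}=-\pw^{n}+\dt\,\partial_x\widehat{\ff}(\ww^{n};\refs{\ww})$ is right, and the key observation that the only $\eps^{-2}$ contribution to the Euler flux is the linear term $\tfrac{\gamma-1}{\eps^{2}}E$, hence is reproduced exactly by the linearisation and cancels in $\widehat{\ff}$, is precisely the structural reason the lemma holds. One small point worth making explicit: your step ``$\ww^{n}=\refs{\ww}+\pw^{n}$ inherits a Hilbert expansion'' tacitly uses that $\refs{\ww}$ itself has a Hilbert expansion (and $\refs{\rho}_{(0)}>0$). This is consistent with the paper's standing assumptions from Section~\ref{section:asymptotic-consistency}, but since Section~\ref{section:Hilbert-expansion} restates its hypotheses separately (Assumptions~\ref{as:1}--\ref{as:2}), it is cleanest to mention it. Your final componentwise cross-check is a nice confirmation and indeed only uses the spatial constancy of $\refs{\ww}$, not any well-preparedness of $\ww^{n}$.
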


In the case we are considering here, i.e., $\Omega \subset \R$, there holds
\begin{equation}
 \label{eq:fdw}
 \ffc'(\refs \ww) \pw^{n+1} = \left( \begin{array}{c}
                                    \delta (\rho \uu) \\
                                    -\refs \uu^2 \delta(\rho) + 2 \refs \uu \delta(\rho \uu) + \frac{p_L}{\eps^2} \\
                                    -\frac{\refs \uu \refs E}{\refs \rho} \delta \rho + \frac{\refs E}{\refs \rho} \delta(\rho \uu) + \refs \uu \delta E - \frac{\refs \uu \refs p}{\refs \rho} \delta \rho + \frac{\refs p}{\refs \rho} \delta (\rho \uu) + \refs \uu p_L
                                  \end{array}
                           \right),
\end{equation}
where we have defined the linearized pressure
\begin{align}
 \label{eq:defpl}
 p_L := (\gamma - 1) \left( \delta E - \frac{\eps^2}{2} \left(-\refs \uu^2 \delta \rho + \refs \uu \delta(\rho \uu) \right) \right).
\end{align}
Note that we have omitted the index $n+1$ on the right-hand side for the sake of a clearer presentation.
\begin{remark}
 It will be crucial for the proof to follow that $p_L = \const + \OO(\eps^2)$. This can already be seen from \eqref{eq:fdw}, because the only term that could destroy a Hilbert expansion is $\frac {p_L}{\eps^2}$. There is a divergence in front, so $p_L$ being constant up to $\eps^2$ is the right choice.
\end{remark}
In the following, we aim to reformulate eq. \eqref{eq:deltaform} in terms of $p_L$. To this end, we first define an operator acting on momentum.
\begin{definition}
 Define the operator $\theta$ through
 \begin{align*}
  \theta: H^{\infty} \rightarrow H^{\infty}, \qquad \mm \mapsto \left(\Id + 2 \dt \refs \uu \partial_x \cdot + \dt^2 \refs \uu^2 \partial_{xx} \cdot \right) \mm.
 \end{align*}

\end{definition}

\begin{lemma}\label{la:theta}
 There holds:
 \begin{enumerate}
  \item $\theta$ is linear.
  \item $\theta$ is invertible.
  \item If a smooth function $\mm^*$ has a Hilbert expansion, then both $\theta(\mm^*)$ and $\theta^{-1}(\mm^*)$ have a Hilbert expansion.
  \item There holds: $\partial_x \theta(\mm) = \theta(\partial_x \mm)$. The same is true for the inverse of $\theta$.
 \end{enumerate}

\end{lemma}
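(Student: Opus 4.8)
The plan is to verify the four properties of $\theta$ in sequence, since they are largely mechanical once the operator is understood as a polynomial in the single commuting operator $\partial_x$. Observe that $\refs \uu$ is a \emph{constant} by Assumption \ref{as:2}, so $\theta = \Id + 2\dt \refs\uu\,\partial_x + \dt^2 \refs\uu^2 \partial_{xx}$ is a polynomial in $\partial_x$ with constant coefficients. In fact it factors as $\theta = (\Id + \dt \refs\uu\,\partial_x)^2$, which is the observation that drives everything else. I would state this factorization early, as it makes invertibility and the Hilbert-expansion stability transparent.

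\textbf{Linearity} is immediate: $\theta$ is a finite linear combination of the linear operators $\Id, \partial_x, \partial_{xx}$. \textbf{Commutation with $\partial_x$} (property 4) follows because $\partial_x$ commutes with itself and with the constant multipliers $\refs\uu, \refs\uu^2$; hence $\partial_x \theta = \theta \partial_x$ on $H^\infty$, and the same identity for $\theta^{-1}$ is obtained by multiplying $\partial_x\theta = \theta\partial_x$ on both sides by $\theta^{-1}$. For \textbf{invertibility} (property 2) I would pass to Fourier space, using the periodic boundary conditions of Assumption \ref{as:1}: for $\mm \in H^\infty$ with Fourier coefficients $\widehat{\mm}(k)$, the symbol of $\theta$ is the scalar
\begin{align}
 \widehat{\theta}(k) = 1 + 2\dt\,\refs\uu\,(ik) + \dt^2 \refs\uu^2 (ik)^2 = (1 + \dt\,\refs\uu\,ik)^2.
\end{align}
This complex number vanishes only if $1 + \dt\,\refs\uu\,ik = 0$, which is impossible since its real part is $1 \neq 0$ for every $k \in \Z$. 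Thus $\widehat\theta(k) \neq 0$ for all $k$, and I define $\theta^{-1}$ by $\widehat{\theta^{-1}\mm}(k) := \widehat\theta(k)^{-1}\widehat\mm(k)$; one checks this is a genuine two-sided inverse and that it maps $H^\infty$ to $H^\infty$.

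The one step that needs genuine care, and which I expect to be the main obstacle, is property 3: the stability of the Hilbert expansion under both $\theta$ and $\theta^{-1}$. For $\theta$ itself this is easy, since applying a fixed differential operator with $\eps$-independent coefficients to each order of the expansion $\mm^* = \mm^*_{(0)} + \eps\,\mm^*_{(1)} + \cdots$ simply produces $\theta(\mm^*) = \theta(\mm^*_{(0)}) + \eps\,\theta(\mm^*_{(1)}) + \cdots$, with each coefficient in $H^\infty$ by smoothness. The delicate direction is $\theta^{-1}$: one must show that $\theta^{-1}(\mm^*)$ again has the form $\mm^{**}_{(0)} + \eps\,\mm^{**}_{(1)} + \OO(\eps^2)$ with smooth coefficients. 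Here I would argue that since $\theta$ and hence $\theta^{-1}$ are \emph{independent of} $\eps$ (the coefficients depend only on $\refs\uu$ and $\dt$), one may simply \emph{define} $\mm^{**}_{(j)} := \theta^{-1}(\mm^*_{(j)})$ order by order; because $\theta^{-1}$ preserves $H^\infty$ (as established via the rapidly-decaying Fourier symbol $\widehat\theta(k)^{-1}$, which grows at most polynomially in $k$ and so preserves the Sobolev-type decay defining $H^\infty$), each $\mm^{**}_{(j)}$ lies in $H^\infty$, and by $\eps$-independence these glue into the Hilbert expansion of $\theta^{-1}(\mm^*)$. The key point to make rigorous is that applying $\theta^{-1}$ commutes with collecting powers of $\eps$, which holds precisely because $\theta^{-1}$ carries no $\eps$-dependence of its own.
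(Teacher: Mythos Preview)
Your proof is correct; the paper omits the proof entirely as ``rather straightforward'', and your Fourier-multiplier argument via the factorization $\theta = (\Id + \dt\,\refs\uu\,\partial_x)^2$ with nonvanishing symbol $(1+\dt\,\refs\uu\,ik)^2$ is precisely the natural route in the periodic setting of Assumption~\ref{as:1} and matches the Fourier methods used throughout Section~\ref{section:Hilbert-expansion}. One minor caveat: for property 3 you assert that $\theta$ is independent of $\eps$, but Assumption~\ref{as:2} only says $\refs\ww$ is constant \emph{in space}---if $\refs\uu$ itself carries a Hilbert expansion (as in the \kuc choice $\refs\ww = \ww^n$) your argument still goes through, since the inverse symbol $(1+\dt\,\refs\uu\,ik)^{-2}$ then has a Hilbert expansion in $\eps$ uniformly in $k$, but this deserves one explicit sentence.
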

The proof of the lemma is rather straightforward, which is why we omit it here.

Using the operator $\theta$ it is possible to express the momentum at time level $n+1$ as a function of $p_L$. What we are doing here is very similar to the work of \cite{BispenDiss}, in the discrete case, it could be interpreted as a Gaussian elimination procedure.
\begin{lemma}
 There holds:
    \begin{align}
     \label{eq:dru}
     \delta(\rho\uu)^{n+1} = -\frac{\dt}{\eps^2}\theta^{-1}(\partial_x p_L^{n+1}) + \delta(\rho\uu)^{**},
    \end{align}
    with $\delta(\rho\uu)^{**}$ being a quantity that possesses a Hilbert expansion.
\end{lemma}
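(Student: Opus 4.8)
The plan is to obtain the representation \eqref{eq:dru} by eliminating the density perturbation from the mass equation and inserting it into the momentum equation, at which point the operator $\theta$ appears automatically. Writing the first (mass) component of the system \eqref{eq:deltaform}, using \eqref{eq:fdw}, gives
\begin{align*}
 \delta\rho^{n+1} + \dt\, \partial_x\!\left(\delta(\rho\uu)^{n+1}\right) + \delta\rho^* = 0,
\end{align*}
so that $\delta\rho^{n+1} = -\dt\,\partial_x\delta(\rho\uu)^{n+1} - \delta\rho^*$. The second (momentum) component of \eqref{eq:deltaform} reads
\begin{align*}
 \delta(\rho\uu)^{n+1} + \dt\,\partial_x\!\left(-\refs\uu^2\,\delta\rho^{n+1} + 2\refs\uu\,\delta(\rho\uu)^{n+1} + \tfrac{p_L^{n+1}}{\eps^2}\right) + \delta(\rho\uu)^* = 0.
\end{align*}

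The first step is to substitute the expression for $\delta\rho^{n+1}$ into this momentum equation. Invoking Assumption \ref{as:2}, so that $\refs\uu$ is constant in space and passes through the spatial derivatives, the three contributions involving $\delta(\rho\uu)^{n+1}$ collapse into
\begin{align*}
 \delta(\rho\uu)^{n+1} + 2\dt\,\refs\uu\,\partial_x\delta(\rho\uu)^{n+1} + \dt^2\refs\uu^2\,\partial_{xx}\delta(\rho\uu)^{n+1} = \theta\!\left(\delta(\rho\uu)^{n+1}\right),
\end{align*}
which is exactly the operator $\theta$ applied to the unknown momentum perturbation. Collecting the remaining terms, the momentum equation becomes
\begin{align*}
 \theta\!\left(\delta(\rho\uu)^{n+1}\right) = -\tfrac{\dt}{\eps^2}\,\partial_x p_L^{n+1} - \dt\,\refs\uu^2\,\partial_x\delta\rho^* - \delta(\rho\uu)^*.
\end{align*}

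Next I would invert $\theta$, which is legitimate by Lemma \ref{la:theta}(2), and define the remainder
\begin{align*}
 \delta(\rho\uu)^{**} := -\theta^{-1}\!\left(\dt\,\refs\uu^2\,\partial_x\delta\rho^* + \delta(\rho\uu)^*\right),
\end{align*}
so that applying $\theta^{-1}$ to the previous display produces precisely \eqref{eq:dru}. It then remains to verify that $\delta(\rho\uu)^{**}$ admits a Hilbert expansion. By the preceding lemma, the components $\delta\rho^*$ and $\delta(\rho\uu)^*$ of $\HH^n$ have Hilbert expansions whenever $\pw^n$ does; since $\refs\uu$ either is $\eps$-independent or itself admits a Hilbert expansion, and differentiation in $x$ preserves the expansion property, the argument $\dt\,\refs\uu^2\,\partial_x\delta\rho^* + \delta(\rho\uu)^*$ has a Hilbert expansion, and Lemma \ref{la:theta}(3) transfers this through $\theta^{-1}$.

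The only genuine subtlety, and hence the main obstacle, is the bookkeeping showing that the substitution reproduces $\theta$ \emph{exactly} rather than up to lower-order correction terms; this is where the spatial constancy of $\refs\uu$ from Assumption \ref{as:2} is indispensable, as it alone guarantees that the second-order differential operator generated by the elimination matches the symbol of $\theta$. Everything else reduces to the already-established invertibility and expansion-preserving properties recorded in Lemma \ref{la:theta} together with the Hilbert expansion of $\HH^n$.
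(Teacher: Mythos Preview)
Your proof is correct and follows essentially the same route as the paper: substitute the mass equation into the momentum equation so that the three terms in $\delta(\rho\uu)^{n+1}$ assemble into $\theta\bigl(\delta(\rho\uu)^{n+1}\bigr)$, then invert $\theta$ and invoke Lemma~\ref{la:theta} together with the Hilbert expansion of $\HH^n$ for the remainder. Your write-up is in fact slightly more explicit than the paper's, since you identify the remainder $\delta(\rho\uu)^{**}$ concretely and spell out why Assumption~\ref{as:2} is what makes the elimination produce exactly the symbol of $\theta$; the paper simply collects these terms into an unspecified $\delta(\rho\uu)^{+}$.
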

\begin{proof}
 There holds
 \begin{align}
    \label{eq:dr}
    \delta \rho^{n+1}       &= -\dt \partial_x \delta(\rho \uu)^{n+1} + \delta \rho^*.
 \end{align}
 Plugging this into the momentum equation yields (note that, again, we have omitted the time level $n+1$ on the right-hand side)
 \begin{align*}
 \delta (\rho \uu)^{n+1} &= -\dt \partial_x \left(-\refs \uu^2 \delta(\rho) + 2 \refs \uu \delta(\rho \uu) + \frac{p_L}{\eps^2} \right)  + \delta(\rho \uu)^* \\
                         &= -\dt \partial_x  \left(-\refs \uu^2 \left(-\dt \partial_x \delta(\rho \uu) \right) + 2 \refs \uu \delta(\rho \uu) + \frac{p_L}{\eps^2} \right)  + \delta(\rho \uu)^{+}.
\end{align*}
By $\delta (\rho \uu)^{+}$ we denote terms that are known to have a Hilbert expansion in $\eps$.
Rearranging terms yields
\begin{align*}
 \underbrace{\left(\Id + 2 \dt \refs \uu \partial_x  + \dt^2 \refs \uu^2 \partial_{xx} \right)}_{\theta} \delta(\rho\uu) = -\frac{\dt}{\eps^2} \partial_x p_L + \delta(\rho \uu)^+.
\end{align*}
Exploiting the properties of $\theta$ formulated in Lemma  \ref{la:theta} yields the claim.
\end{proof}

Based on this lemma, we can find that $p_L$ fulfills a third-order differential equation:
\begin{lemma}
 Let $p_L$ be given as in \eqref{eq:defpl}. Then $p_L$ satisfies at time level $n+1$ the equation
 \begin{align}
  \label{eq:pl}
  \omega_0 p_L^{n+1} + \omega_1 \partial_x p_L^{n+1} + \frac{\omega_2}{\eps^2} \partial_{xx} p_L^{n+1} + \frac{\omega_3}{\eps^2} \partial_{xxx}  p_L^{n+1} = p_L^{*},
 \end{align}
 with the constants $\omega_i$ being defined by
    \begin{alignat}{3}
    \label{eq:omega}
    \omega_0 &= \frac{-1}{\gamma-1}, &\quad&
    \omega_2 &=& \frac{\dt^2}{\refs \rho} \left(\frac{-\gamma-5}{\gamma-1}\refs E + \frac{\gamma^2 + 5}{(\gamma-1)^2}\refs p \right) \\
    \omega_1 &= -\dt \refs \uu \frac{5 + \gamma}{2(\gamma-1)}, &\quad&
    \omega_3 &=& \frac{\dt^3 \refs \uu}{\refs \rho} \left(\frac{-2}{\gamma-1} \refs E + \frac{\gamma^2-\gamma+2}{(\gamma-1)^2} \refs p\right);
    \end{alignat}
    and $p_L^* \in H^{\infty}$ being a function that possesses a Hilbert expansion.
\end{lemma}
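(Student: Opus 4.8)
The strategy is to eliminate $\delta\rho^{n+1}$, $\delta(\rho\uu)^{n+1}$ and $\delta E^{n+1}$ from the three scalar components of \eqref{eq:deltaform} in favour of $p_L^{n+1}$ alone. The momentum component has already been solved in \eqref{eq:dru}, and feeding this into the mass relation \eqref{eq:dr}, while using $\partial_x\theta^{-1}=\theta^{-1}\partial_x$ from Lemma \ref{la:theta}, gives the two identities
\begin{align*}
 \delta(\rho\uu)^{n+1} = -\frac{\dt}{\eps^2}\theta^{-1}(\partial_x p_L^{n+1}) + [\text{H}],
 \qquad
 \delta\rho^{n+1} = \frac{\dt^2}{\eps^2}\theta^{-1}(\partial_{xx} p_L^{n+1}) + [\text{H}],
\end{align*}
where throughout $[\text{H}]$ denotes a generic smooth quantity possessing a Hilbert expansion. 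That $[\text{H}]$ is closed under the operations we use is exactly what makes the bookkeeping work: the previous lemma guarantees that $\delta\rho^*,\delta(\rho\uu)^*,\delta E^*$ lie in this class, and by Lemma \ref{la:theta} together with the fact that $\partial_x$ and multiplication by $\eps^2$ preserve it, so do all the remainders produced below.

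Next I would insert these identities, together with the definition \eqref{eq:defpl} rewritten as $\delta E^{n+1} = \tfrac{1}{\gamma-1}p_L^{n+1} + \tfrac{\eps^2}{2}\big(-\refs\uu^2\delta\rho^{n+1} + \refs\uu\,\delta(\rho\uu)^{n+1}\big)$, into the energy component of \eqref{eq:deltaform}, with the energy flux read off from \eqref{eq:fdw}. Using $\partial_x\theta^{-1}=\theta^{-1}\partial_x$ once more, this collapses to a scalar relation of the form
\begin{align*}
 \tfrac{1}{\gamma-1} p_L^{n+1}
 + a_1\,\partial_x p_L^{n+1}
 + a_2\,\theta^{-1}(\partial_x p_L^{n+1})
 + a_3\,\theta^{-1}(\partial_{xx} p_L^{n+1})
 + a_4\,\theta^{-1}(\partial_{xxx} p_L^{n+1}) = [\text{H}],
\end{align*}
with explicit constant coefficients; the singular $\eps^{-2}$ weight appears only in $a_3,a_4$ through the reference enthalpy $\tfrac{\refs E+\refs p}{\refs\rho}$, while the $\tfrac{\eps^2}{2}(\cdots)$ corrections contribute $\OO(1)$ pieces to $a_2,a_3,a_4$ proportional to powers of $\refs\uu$. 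I would then apply $\theta = \Id + 2\dt\refs\uu\,\partial_x + \dt^2\refs\uu^2\,\partial_{xx}$ to the whole relation: since $\theta$ is linear, commutes with $\partial_x$, maps $[\text{H}]$ into itself and satisfies $\theta\theta^{-1}=\Id$ (Lemma \ref{la:theta}), every $\theta^{-1}$ disappears and the lower-order derivatives of $p_L^{n+1}$ get spread up to third order. Collecting by derivative order yields an equation of exactly the shape \eqref{eq:pl}, whose right-hand side lies in $[\text{H}]$ and is therefore the announced $p_L^\ast\in H^\infty$ possessing a Hilbert expansion.

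It then remains to match the coefficients with \eqref{eq:omega}. The coefficient of $p_L^{n+1}$ is $\tfrac{1}{\gamma-1}$ and the coefficient of $\partial_x p_L^{n+1}$ collects to $\refs\uu\dt\,\tfrac{\gamma+5}{2(\gamma-1)}$; after multiplying the whole equation by $-1$ (which only renames $[\text{H}]$) these become $\omega_0$ and $\omega_1$. The coefficients of $\partial_{xx}p_L^{n+1}$ and $\partial_{xxx}p_L^{n+1}$ come out as a genuinely singular $\eps^{-2}$ part proportional to $\tfrac{\refs E+\refs p}{\refs\rho}$ plus an $\OO(1)$ part proportional to $\refs\uu^2$, respectively $\refs\uu^3$. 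The decisive step that brings them to the single weighted form $\tfrac{\omega_2}{\eps^2}$, $\tfrac{\omega_3}{\eps^2}$ is to eliminate $\refs\uu^2$ through the reference-state equation of state \eqref{V_EqState}, i.e. $\eps^2\refs\uu^2 = \tfrac{2}{\refs\rho}\big(\refs E - \tfrac{\refs p}{\gamma-1}\big)$; this converts the $\OO(1)$ pieces into $\eps^{-2}$-scaled combinations of $\refs E$ and $\refs p$, and adding them to the already singular part and simplifying the rational functions of $\gamma$ reproduces precisely $\omega_2$ and $\omega_3$.

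The main obstacle I expect is not conceptual but the careful bookkeeping: maintaining a clean separation between the terms that carry the singular $\eps^{-2}$ weight and those that may be absorbed into $[\text{H}]$, and checking that no uncontrolled $\eps^{-2}\cdot[\text{H}]$ term ever appears (each singular prefactor only ever multiplies a derivative of $p_L^{n+1}$, never an $[\text{H}]$ remainder). The second delicate point is purely algebraic: performing the equation-of-state substitution and the final collection of the $\refs E$- and $\refs p$-coefficients accurately enough to land on the exact expressions in \eqref{eq:omega}, in particular recovering $\tfrac{-\gamma-5}{\gamma-1}$, $\tfrac{\gamma^2+5}{(\gamma-1)^2}$, $\tfrac{-2}{\gamma-1}$ and $\tfrac{\gamma^2-\gamma+2}{(\gamma-1)^2}$.
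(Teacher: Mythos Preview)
Your proposal is correct and follows essentially the same route as the paper. The paper organizes the computation slightly differently: it evaluates $\theta(\delta E^{n+1})$ in two separate ways --- once from the energy component of the scheme (yielding intermediate constants $\omega_i^l$) and once from the definition \eqref{eq:defpl} of $p_L$ (yielding $\omega_i^r$) --- and then equates them to obtain $\omega_i=\omega_i^l-\omega_i^r$, whereas you substitute the definition of $p_L$ directly into the energy equation and apply $\theta$ in one pass; the two are algebraically equivalent. Your explicit identification of the reference equation of state $\eps^2\refs\uu^2=\tfrac{2}{\refs\rho}\big(\refs E-\tfrac{\refs p}{\gamma-1}\big)$ as the device that converts the $\OO(1)$ pieces proportional to $\refs\uu^2,\refs\uu^3$ into $\eps^{-2}$-weighted combinations of $\refs E$ and $\refs p$ is exactly the step the paper hides under ``straightforward computations,'' and is indeed what produces the stated $\omega_2,\omega_3$.
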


\begin{proof}
The proof consists of lengthy and tedious, but rather straightforward computations.
The important steps are the following:
\begin{itemize}
 \item First, write $\delta E^{n+1}$ explicitly based on \eqref{eq:fdw}. Use \eqref{eq:dr} and \eqref{eq:dru} to express all quantities $\delta \rho$ and $\delta(\rho \uu)$ in terms of $p_L$. Substitute $E^{n+1}$ on the right-hand side by using the definition of $p_L$ in \eqref{eq:defpl}. Then, apply $\theta$ to the equation, which results in
 \begin{align}
 \label{eq:te1}
 \theta(\delta E^{n+1}) = \omega_0^l p_L + \omega_1^l \partial_{x}  p_L + \omega_2^l \partial_{xx} p_l + \omega_3^l \partial_{xxx} p_L + \delta E^{**}.
\end{align}
As above, $\delta E^{**}$ is a smooth term having a Hilbert expansion. The constants $\omega_i^l$ are given by
\begin{alignat*}{3}
 \omega_0^l &= 0, &\quad&
 \omega_2^l &=& \frac{\dt^2}{\eps^2\refs \rho} \left(\frac{\gamma^2+\gamma+2}{(\gamma-1)^2}\refs p - \frac{2\gamma+2}{\gamma-1}\refs E \right)  \\
 \omega_1^l &= -\frac{\gamma}{\gamma - 1} \dt \refs \uu, &\quad&
 \omega_3^l &=& \frac{\dt^3 \refs \uu}{\eps^2 \refs \rho} \left(\frac{\gamma}{\gamma -1}\refs p - \frac{2}{\gamma -1} \refs E \right).
\end{alignat*}
 \item Second, write $\delta E^{n+1}$ explicitly, this time based on the definition of $p_L$ in \eqref{eq:defpl}, substitute $\delta \rho$ and $\delta (\rho \uu)$ accordingly. Applying $\theta$ on both sides then yields
 \begin{align}
 \label{eq:te2}
 \theta(\delta E^{n+1}) = \omega_0^r p_L + \omega_1^r \partial_{x} p_L + \omega_2^r \partial_{xx} p_l + \omega_3^r \partial_{xxx} p_L + \delta E^{***}.
\end{align}
Again, $\delta E^{***}$ is a smooth term with a Hilbert expansion. The constants $\omega_i^r$ are given by
\begin{alignat*}{3}
 \omega_0^r &= \frac{1}{\gamma-1}, &\quad&
 \omega_2^r &=& \frac{\dt^2}{\eps^2\refs \rho } \left(\frac{3-\gamma}{\gamma-1} \refs E - \frac{3-\gamma}{(\gamma-1)^2} \refs p\right)\\
 \omega_1^r &= \dt\refs \uu \frac{5-\gamma}{2(\gamma-1)}, &\quad&
 \omega_3^r &=& 0.
\end{alignat*}
\item Equating \eqref{eq:te1} and \eqref{eq:te2} and subtracting the constants yields the claim.
\end{itemize}

\end{proof}

\begin{lemma}\label{la:wtwt}
 Let $\gamma \geq 1.$ Then $\omega_2$ and $\omega_3$ cannot be zero simultaneously.
\end{lemma}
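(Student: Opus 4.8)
The plan is to argue by contradiction: assume $\omega_2 = 0$ and $\omega_3 = 0$ simultaneously and extract a polynomial constraint on $\gamma$ that is incompatible with $\gamma > 1$. Since $\dt > 0$ and $\refs \rho > 0$ (density is positive), the prefactors $\dt^2/\refs \rho$ and $\dt^3/\refs \rho$ in $\omega_2,\omega_3$ never vanish, so the vanishing of these constants is governed entirely by the bracketed expressions, together with the extra factor $\refs \uu$ sitting in front of the bracket of $\omega_3$. Clearing the denominators $(\gamma-1)^2$ — legitimate for $\gamma > 1$; at $\gamma = 1$ the coefficients $\omega_i$ are not even defined, and for an ideal gas one always has $\gamma > 1$, which is why I read the hypothesis as $\gamma > 1$ — the condition $\omega_2 = 0$ becomes the linear relation $-(\gamma+5)(\gamma-1)\,\refs E + (\gamma^2+5)\,\refs p = 0$ (call it (A)), while the bracket of $\omega_3$ becomes $-2(\gamma-1)\,\refs E + (\gamma^2-\gamma+2)\,\refs p = 0$ (call it (B)).

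First I would handle the generic case $\refs \uu \neq 0$, where $\omega_3 = 0$ forces (B). Treating (A) and (B) as a homogeneous linear system in the unknowns $(\refs E,\refs p)$, its determinant factorises as
\[
 (\gamma-1)\Big[-(\gamma+5)(\gamma^2-\gamma+2) + 2(\gamma^2+5)\Big] = -\gamma(\gamma+3)(\gamma-1)^2,
\]
which is strictly negative for every $\gamma > 1$. Hence the system is nonsingular and admits only the trivial solution $\refs E = \refs p = 0$, contradicting the physical positivity of energy and pressure. Equivalently — and this is the computation I would actually record — substituting $\refs E$ from (B) into the bracket of $\omega_2$ gives the explicit nonzero value
\[
 \omega_2\,\frac{\refs \rho}{\dt^2} = -\frac{\gamma(\gamma+3)}{2(\gamma-1)}\,\refs p \neq 0,
\]
so $\omega_2 \neq 0$ after all.

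The remaining case $\refs \uu = 0$ is where the argument is most delicate: there $\omega_3$ vanishes identically, the determinant step is vacuous, and positivity of $\refs E,\refs p$ by itself no longer rules out (A), since (A) merely fixes a positive ratio $\refs p/\refs E$. Here I would supply the missing structural input by invoking the equation of state \eqref{V_EqState}, which for $\refs \uu = 0$ collapses (independently of $\eps$) to $\refs E = \refs p/(\gamma-1)$. Substituting this into the bracket of $\omega_2$ collapses it to $\gamma\,\refs E$, whence
\[
 \omega_2 = \frac{\gamma\,\dt^2\,\refs E}{\refs \rho} > 0
\]
for $\gamma > 1$. Thus $\omega_2 \neq 0$ in this case too, and the contradiction is complete in all cases.

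The only genuine obstacle I anticipate is precisely this $\refs \uu = 0$ branch: the determinant/substitution manipulation disposes of $\refs \uu \neq 0$ by pure algebra, but when $\refs \uu = 0$ one must use that the reference state is a \emph{physical} state obeying the equation of state — without that, $\omega_2$ could in principle vanish for a nonphysical ratio $\refs p/\refs E$. I would therefore make the positivity of $\refs \rho,\refs p,\refs E$ and the restriction $\gamma > 1$ explicit at the outset, since both are essential to the conclusion.
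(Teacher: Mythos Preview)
Your argument in the generic case $\refs \uu \neq 0$ is exactly the paper's proof, just packaged differently: the paper equates the two expressions $\refs E = \frac{\gamma^2+5}{(\gamma-1)(\gamma+5)}\refs p$ and $\refs E = \frac{\gamma^2-\gamma+2}{2(\gamma-1)}\refs p$ and observes that the resulting equation in $\gamma$ has only the roots $\gamma=-3$ and $\gamma=0$, which is precisely your determinant factorisation $-\gamma(\gamma+3)(\gamma-1)^2$.

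Where you go beyond the paper is the case $\refs \uu = 0$. The paper's proof silently passes from $\omega_3=0$ to the bracket of $\omega_3$ vanishing, which is only legitimate when $\refs \uu \neq 0$; it never treats $\refs \uu = 0$ separately. Your observation that one must then invoke the equation of state $\refs E = \refs p/(\gamma-1)$ to force $\omega_2 = \gamma\,\dt^2\refs E/\refs\rho > 0$ is a genuine addition that closes this gap. So your proof is not merely equivalent to the paper's but slightly more complete, and your instinct that the $\refs \uu = 0$ branch is ``the most delicate'' is well founded --- it is the one case the paper does not actually cover.
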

\begin{proof}
 Assume that $\omega_2 = 0$ and $\omega_3 = 0$. Then there holds
 \begin{align*}
  \refs E = \frac{\gamma^2+5}{(\gamma-1)(\gamma+5)} \refs p
 \end{align*}
 and
 \begin{align*}
  \refs E = \frac{\gamma^2-\gamma+2}{2(\gamma-1)} \refs p.
 \end{align*}
 Hence,
 \begin{align*}
  \frac{\gamma^2+5}{(\gamma-1)(\gamma+5)} = \frac{\gamma^2-\gamma+2}{2(\gamma-1)}.
 \end{align*}
 The only roots of this equation are $\gamma = -3$ and $\gamma = 0$, they are hence outside the range of $\gamma$.
\end{proof}

\begin{theorem}\label{thm:pl}
 Let $\gamma \geq 1$. Furthermore (as in this whole section), assume that Assumptions \ref{as:1} and \ref{as:2} hold. Then $p_L$ fulfilling the equation \eqref{eq:pl} has a Hilbert expansion, in particular it holds
 \begin{align*}
  p_L = \const + \OO(\eps^2).
 \end{align*}

\end{theorem}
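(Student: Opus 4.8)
The plan is to exploit the periodic boundary conditions of Assumption~\ref{as:1} together with the constant-coefficient structure of \eqref{eq:pl} by passing to Fourier series. Writing $p_L = \sum_{k \in \Z} \widehat{p_L}(k)\,e^{ikx}$ and likewise for $p_L^{*}$, and using $\partial_x \mapsto ik$, $\partial_{xx} \mapsto -k^2$, $\partial_{xxx} \mapsto -ik^3$, the differential equation \eqref{eq:pl} decouples into one scalar algebraic equation per mode,
\begin{equation}
 \left( \omega_0 + i\omega_1 k - \frac{\omega_2 k^2 + i\omega_3 k^3}{\eps^2} \right) \widehat{p_L}(k) = \widehat{p_L^{*}}(k).
 \label{eq:plan-mode}
\end{equation}
Since $p_L^{*} \in H^{\infty}$ possesses a Hilbert expansion, so does each Fourier coefficient $\widehat{p_L^{*}}(k)$; I would solve \eqref{eq:plan-mode} mode by mode and then reassemble. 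First I would treat the zeroth mode $k=0$, where \eqref{eq:plan-mode} reduces to $\omega_0\,\widehat{p_L}(0) = \widehat{p_L^{*}}(0)$ with $\omega_0 = -1/(\gamma-1) \neq 0$, so $\widehat{p_L}(0) = \widehat{p_L^{*}}(0)/\omega_0$ inherits the Hilbert expansion of $\widehat{p_L^{*}}(0)$. This mode is precisely the spatially constant part of $p_L$ and supplies the $\const$ in the statement.

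For $k \neq 0$ I would multiply \eqref{eq:plan-mode} by $\eps^2$ and write
\begin{equation}
 \widehat{p_L}(k) = \frac{\eps^2\,\widehat{p_L^{*}}(k)}{a_k + \eps^2 b_k}, \qquad a_k := -k^2(\omega_2 + i\omega_3 k), \quad b_k := \omega_0 + i\omega_1 k.
 \label{eq:plan-nonzero}
\end{equation}
The decisive input is Lemma~\ref{la:wtwt}: since the real constants $\omega_2,\omega_3$ are not both zero, $\omega_2 + i\omega_3 k \neq 0$ for every $k \neq 0$, so $a_k \neq 0$ and $|a_k| \geq c\,k^2$ for some $c>0$. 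As $b_k$ has degree one in $k$ while $a_k$ has degree at least two, the ratio is bounded, $B := \sup_{k \neq 0} |b_k/a_k| < \infty$. Hence for $\eps$ small enough that $B\eps^2 < \tfrac12$, the denominator in \eqref{eq:plan-nonzero} is bounded away from zero uniformly in $k$, and the geometric series
\begin{equation}
 \widehat{p_L}(k) = \eps^2\,\frac{\widehat{p_L^{*}}(k)}{a_k} \sum_{j \geq 0} (-1)^j \eps^{2j} \left( \frac{b_k}{a_k} \right)^{\!j}
 \label{eq:plan-geom}
\end{equation}
converges uniformly in $k$.

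In particular $\widehat{p_L}(k) = \OO(\eps^2)$ for each $k \neq 0$, and, combined with the Hilbert expansion of $\widehat{p_L^{*}}(k)$, \eqref{eq:plan-geom} yields a Hilbert expansion of $\widehat{p_L}(k)$ whose $\eps^0$ and $\eps^1$ coefficients vanish. Since only $k=0$ contributes at orders $\eps^0$ and $\eps^1$, these orders of $p_L$ are spatially constant, which is exactly $p_L = \const + \OO(\eps^2)$ as required by the crucial remark preceding the definition of $\theta$.

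The remaining, and main, point is to lift this mode-wise result to a genuine $H^{\infty}$ Hilbert expansion with uniform-in-$x$ estimates. Here I would use that $p_L^{*} \in H^{\infty}$ forces $\widehat{p_L^{*}}(k)$ to decay faster than any power of $|k|$, whereas division by $a_k$ (which grows like $|k|^2$, or $|k|^3$ when $\omega_3 \neq 0$) only improves the decay; collecting equal powers of $\eps$ in the double series \eqref{eq:plan-geom} then defines each Hilbert coefficient of $p_L$ as an $H^{\infty}$ function, and absolute summability of the Fourier series converts the bound $B$ into control of the $\OO(\eps^2)$ remainder in the sup-norm. The subtle step to watch is precisely the uniformity in $k$ of the geometric expansion and of the remainder estimate: it hinges on the degree count together with Lemma~\ref{la:wtwt}, because if the symbol could vanish for some nonzero mode the division in \eqref{eq:plan-nonzero} would break down and no uniform $\OO(\eps^2)$ bound could be obtained.
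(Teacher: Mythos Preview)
Your proof is correct and follows essentially the same Fourier-mode argument as the paper: expand $p_L$ in a Fourier series, treat the $k=0$ mode via $\omega_0\neq 0$, and for $k\neq 0$ invoke Lemma~\ref{la:wtwt} to conclude $\widehat{p_L}(k)=\OO(\eps^2)$. You are in fact more careful than the paper about the uniformity in $k$ of the geometric expansion and about lifting the mode-wise estimates back to an $H^{\infty}$ statement; the paper simply writes the leading term plus $\OO(\eps^3)$ without discussing these points.
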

\begin{proof}
 Note that $p_L$ fulfills the equation
 \begin{align*}
  \omega_0 p_L + \omega_1 \partial_{x} p_L + \frac{\omega_2}{\eps^2} \partial_{xx}  p_L + \frac{\omega_3}{\eps^2} \partial_{xxx}  p_L = p_L^*,
 \end{align*}
 see \eqref{eq:pl}; with $p_L^*$ having a Hilbert expansion. Due to Lemma  \ref{la:wtwt} $\omega_2$ and $\omega_3$ cannot be zero simultaneously.
 Because we are operating under periodic boundary conditions, we apply the Fourier expansion for $p_L$
 \begin{align*}
  p_L(x) := \sum_{k \in \Z} \widehat {p_L}(k) e^{ikx}.
 \end{align*}
 Plugging this into \eqref{eq:pl} yields the algebraic equation for $\widehat p_L(k)$
 \begin{align*}
  \frac 1 {\eps^2} \left(\eps^2\omega_0 + \eps^2i k \omega_1 - {\omega_2} k^2 - {\omega_3} i k^3 \right) \widehat {p_L}(k) = \widehat {p_L^*}(k),
 \end{align*}
 where $\widehat {p_L^*}(k)$ denotes the Fourier coefficients of the right-hand side. Because we know that the right-hand side has the Hilbert expansion, we also know that there exists a Hilbert expansion for $ \widehat {p_L^*}(k)$. In particular, with respect to $\eps$, we have $\widehat {p_L^*}(k) = \OO(1)$.
 The Fourier coefficients of $p_L$ are hence given by
 \begin{align*}
  \widehat {p_L}(k) = \frac{\eps^2 \widehat {p_L^*}(k)}{\eps^2\omega_0 + \eps^2i k \omega_1 - {\omega_2} k^2 - {\omega_3} i k^3}.
 \end{align*}
 For $k = 0$ this yields 
 \begin{align*}
  \widehat {p_L}(0) = \frac{\widehat {p_L^*}(0)}{\omega_0} = \OO(1),
 \end{align*}
 while for $k \neq 0$, there holds (note that $\omega_2$ and $\omega_3$ are not zero simultaneously!)
 \begin{align*}
  \widehat {p_L}(k) = -\eps^2 \frac{\widehat {p_L^*}(k)}{\omega_2 k^2 + \omega_3 ik^3} + \OO(\eps^3) = \OO(\eps^2).
 \end{align*}
 Consequently, we have
 \begin{align*}
  p_L(x) = \widehat {p_L}(0) + \sum_{k \in \Z^{\neq 0}} \widehat {p_L}(k) e^{ikx} = \const + \OO(\eps^2),
 \end{align*}
which concludes the proof.

\end{proof}

The following corollary guarantees the existence of a Hilbert expansion having the information on $p_L$. 
\begin{corollary}
 Under the assumptions made in Theorem  \ref{thm:pl}, $\pw^{n+1}$ has a Hilbert expansion, i.e., it can be written as
 \begin{align*}
  \pw^{n+1} = \pw_0^{n+1} + \eps \pw_1^{n+1} + \eps^2 \pw_2^{n+1} + \ldots
 \end{align*}
\end{corollary}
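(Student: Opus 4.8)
The plan is to read off each component of $\pw^{n+1} = (\delta\rho^{n+1}, \delta(\rho\uu)^{n+1}, \delta E^{n+1})^T$ from the relations already derived and to propagate the Hilbert expansion of $p_L^{n+1}$ guaranteed by Theorem \ref{thm:pl} through them one component at a time. The engine of the whole argument is the observation, recorded in the remark following \eqref{eq:defpl} and sharpened in Theorem \ref{thm:pl}, that $p_L^{n+1} = \const + \OO(\eps^2)$. Since a spatial derivative annihilates the constant, this immediately yields $\partial_x p_L^{n+1} = \OO(\eps^2)$, so that $\frac{1}{\eps^2}\partial_x p_L^{n+1}$ is $\OO(1)$ and inherits a Hilbert expansion in $\eps$. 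This is exactly what is needed to neutralise the singular prefactor $\eps^{-2}$ that appears in the momentum formula.

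First I would treat the momentum. By \eqref{eq:dru} we have $\delta(\rho\uu)^{n+1} = -\frac{\dt}{\eps^2}\theta^{-1}(\partial_x p_L^{n+1}) + \delta(\rho\uu)^{**}$, where $\delta(\rho\uu)^{**}$ already possesses a Hilbert expansion. Using $\partial_x\theta^{-1} = \theta^{-1}\partial_x$ (Lemma \ref{la:theta}, item 4) I rewrite the singular term as $-\dt\,\theta^{-1}\!\left(\frac{1}{\eps^2}\partial_x p_L^{n+1}\right)$. Its argument $\frac{1}{\eps^2}\partial_x p_L^{n+1}$ has a Hilbert expansion by the paragraph above, and $\theta^{-1}$ maps functions admitting a Hilbert expansion to functions admitting a Hilbert expansion (Lemma \ref{la:theta}, item 3); hence $\delta(\rho\uu)^{n+1}$ has a Hilbert expansion. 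Next I would obtain the density from \eqref{eq:dr}, namely $\delta\rho^{n+1} = -\dt\,\partial_x\delta(\rho\uu)^{n+1} + \delta\rho^*$. Here $\delta\rho^*$ is a component of $\HH^n$ and has a Hilbert expansion by the preceding lemma, $\partial_x$ preserves Hilbert expansions termwise, and $\delta(\rho\uu)^{n+1}$ was just shown to have one; therefore $\delta\rho^{n+1}$ has a Hilbert expansion as well.

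Finally I would recover the energy by solving the definition \eqref{eq:defpl} of $p_L$ for $\delta E$, which gives $\delta E^{n+1} = \frac{p_L^{n+1}}{\gamma-1} + \frac{\eps^2}{2}\left(-\refs\uu^2\,\delta\rho^{n+1} + \refs\uu\,\delta(\rho\uu)^{n+1}\right)$. Every quantity on the right — $p_L^{n+1}$, $\delta\rho^{n+1}$ and $\delta(\rho\uu)^{n+1}$ — now has a Hilbert expansion, $\refs\uu$ is a fixed constant by Assumption \ref{as:2}, and multiplication by the explicit power $\eps^2$ merely shifts the expansion indices; thus $\delta E^{n+1}$ has a Hilbert expansion. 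Collecting the three components gives the claimed expansion for $\pw^{n+1}$.

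I do not expect a genuine obstacle here: once Theorem \ref{thm:pl} is available, the corollary is essentially a bookkeeping exercise in back-substitution. The one point requiring care — and the conceptual heart of the entire section — is the cancellation of the $\eps^{-2}$ singularity in the momentum relation, which succeeds precisely because the zeroth- and first-order parts of $p_L^{n+1}$ are spatially constant and hence invisible to $\partial_x$. All remaining manipulations rely only on the closure properties of the class of functions admitting a Hilbert expansion (stability under $\partial_x$, under multiplication by constants and by fixed powers of $\eps$, and under the linear operators $\theta$ and $\theta^{-1}$), together with the standing smoothness hypothesis of Assumption \ref{as:1}.
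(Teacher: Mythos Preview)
Your proposal is correct and follows essentially the same route as the paper: momentum first via \eqref{eq:dru} together with $\partial_x p_L^{n+1}/\eps^2=\OO(1)$ and the mapping properties of $\theta^{-1}$, then density via \eqref{eq:dr}, then energy by inverting \eqref{eq:defpl}. You are simply a bit more explicit about invoking item~4 of Lemma~\ref{la:theta} and about the closure properties being used, but the argument is the same.
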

\begin{proof}
 Due to \eqref{eq:dru}, $\delta(\rho \uu)^{n+1}$ can be written as
 \begin{align*}
  \delta(\rho \uu)^{n+1} = -\dt \theta^{-1}\left(\frac{\partial_x p_L^{n+1}}{\eps^2}\right) + \delta(\rho \uu)^{**}.
 \end{align*}
 Because $\frac{\partial_x p_L^{n+1}}{\eps^2} = \OO(1)$ and the properties of $\theta^{-1}$, see Lemma  \ref{la:theta}, also $\delta(\rho\uu)^{n+1}$ can be written in terms of a Hilbert expansion. Due to \eqref{eq:dr} this property carries over to $\delta\rho^{n+1}$.
 Now, as $p_L$, $\delta \rho$ and $\delta (\rho \uu)$ have the Hilbert expansions, it is clear that also $\delta E$ has the Hilbert expansion, too, due to \eqref{eq:defpl}. This proves the claim.
\end{proof}

\section{Conclusion and Outlook}\label{sec:conout}
In this work we have introduced and analysed a class of linearly implicit methods for the discretization of the full Euler equation that unifies several already existing schemes, in particular the \kuc and the RS-IMEX scheme. We have shown that this class of methods is asymptotically consistent and exhibits a phenomenon that we call superconsistency, i.e., the consistency of the flux approximation is higher than expected. Furthermore, for a prototype example, we have shown that this unified class of methods possesses the Hilbert expansion in the case of the full Euler equations which is, to the best of our knowledge, a novel contribution.

Ongoing work focuses on the extension of the analysis, in particular the existence of the Hilbert expansion, to more general situations in multiple dimensions. It is unclear whether the Fourier analysis is then still a suitable framework, as the straightforward extension of the approach we presented here is severely more complicated and it is restricted to the periodic boundary conditions.
Finally, it remains to  investigate numerically the efficiency and accuracy of the proposed splittings in general experiments.

\bibliographystyle{siam}
\bibliography{references}

\begin{thebibliography}{10}

\bibitem{Ascher1997}
{\sc U.~M. Ascher, S.~Ruuth, and R.~Spiteri}, {\em Implicit-explicit
  {{{Runge-Kutta}}} methods for time-dependent partial differential equations},
  Appl. Num. Math., 25 (1997), pp.~151--167.

\bibitem{Ascher1995}
{\sc U.~M. Ascher, S.~Ruuth, and B.~Wetton}, {\em {Implicit-Explicit} methods
  for time-dependent partial differential equations}, SIAM J. Num. Anal., 32
  (1995), pp.~797--823.

\bibitem{BispenDiss}
{\sc G.~Bispen}, {\em {IMEX} Finite Volume Methods for the Shallow Water
  Equations}, PhD thesis, Johannes Gutenberg-Universit\"at, 2015.

\bibitem{BispenIMEXSWE}
{\sc G.~Bispen, K.~R. Arun, M.~Luk{\'a}{\v{c}}ov{\'a}-Medvi\v{d}ov{\'a}, and
  S.~Noelle}, {\em {IMEX} large time step finite volume methods for low
  {Froude} number shallow water flows}, Commun. Comput. Phys., 16 (2014),
  pp.~307--347.

\bibitem{Bispen2017222}
{\sc G.~Bispen, M.~Luk{\'a}{\v{c}}ov{\'a}-Medvi\v{d}ov{\'a}, and L.~Yelash},
  {\em Asymptotic preserving {IMEX} finite volume schemes for low {M}ach number
  {E}uler equations with gravitation}, J. Comput. Phys., 335 (2017),
  pp.~222--248.

\bibitem{Bo07}
{\sc S.~Boscarino}, {\em Error analysis of {IMEX {{Runge-Kutta}}} methods
  derived from differential-algebraic systems}, SIAM J. Num. Anal., 45 (2007),
  pp.~1600--1621.

\bibitem{CoDeKu12}
{\sc F.~Cordier, P.~Degond, and A.~Kumbaro}, {\em An asymptotic-preserving
  all-speed scheme for the {{Euler}} and {Navier-Stokes} equations}, J. Comput.
  Phys., 231 (2012), pp.~5685--5704.

\bibitem{DolejsiFeistauer2004}
{\sc V.~Dolej\v{s}\'{\i} and M.~Feistauer}, {\em A semi-implicit discontinuous
  {G}alerkin finite element method for the numerical solution of inviscid
  compressible flow}, J. Comput. Phys., 198 (2004), pp.~727--746.

\bibitem{FeistauerKucera2007}
{\sc M.~Feistauer and V.~Ku\v{c}era}, {\em On a robust discontinuous {G}alerkin
  technique for the solution of compressible flow}, J. Comput. Phys., 224
  (2007), pp.~208--221.

\bibitem{giraldo2010semi}
{\sc F.~Giraldo, M.~Restelli, and M.~L{\"a}uter}, {\em Semi-implicit
  formulations of the {Navier-Stokes} equations: {A}pplication to
  nonhydrostatic atmospheric modeling}, SIAM J. Sci. Comp., 32 (2010),
  pp.~3394--3425.

\bibitem{Guo_Li_Xu}
{\sc Z.~Guo, J.~Li, and K.~Xu}, {\em On unified preserving properties of
  kinetic schemes}, arXiv: 1909.04923,  (2019).

\bibitem{HaJiLi12}
{\sc J.~Haack, S.~Jin, and J.-G. Liu}, {\em An all-speed asymptotic-preserving
  method for the isentropic {{Euler}} and {Navier-Stokes} equations}, Commun.
  Comput. Phys., 12 (2012), pp.~955--980.

\bibitem{MunzDG12}
{\sc F.~Hindenland, G.~Gassner, C.~Altmann, A.~Beck, M.~Staudenmaier, and C.-D.
  Munz}, {\em Explicit discontinuous {Galerkin} methods for unsteady problems},
  Comput. Fluids, 61 (2012), pp.~86--93.

\bibitem{Jin2012}
{\sc S.~Jin}, {\em Asymptotic preserving {(AP)} schemes for multiscale kinetic
  and hyperbolic equations: A review}, Rivista di Matematica della Universita
  Parma, 3 (2012), pp.~177--216.

\bibitem{Kaiser2018-Diss}
{\sc K.~Kaiser}, {\em A high order discretization technique for singularly
  perturbed differential equations}, PhD thesis, IGPM, RWTH Aachen University,
  2018.

\bibitem{KS17}
{\sc K.~Kaiser and J.~Sch\"utz}, {\em A high-order method for weakly
  compressible flows}, Commun. Comput. Phys., 22 (2017), pp.~1150--1174.

\bibitem{KSSN16}
{\sc K.~Kaiser, J.~Sch\"utz, R.~Sch\"obel, and S.~Noelle}, {\em A new stable
  splitting for the isentropic {Euler} equations}, J. Sci. Comp., 70 (2017),
  pp.~1390--1407.

\bibitem{Kl95}
{\sc R.~Klein}, {\em Semi-implicit extension of a {Godunov}-type scheme based
  on low {{Mach}} number asymptotics {I}: {O}ne-dimensional flow},
  J.~Comput.~Phys., 121 (1995), pp.~213--237.

\bibitem{AP_1987}
{\sc E.~Larsen, J.~Morel, and J.~W.F.~Miller}, {\em Asymptotic solutions of
  numerical transport problems in optically thick, diffusive regimes},
  J.~Comput. Phys., 69 (1987), pp.~283--324.

\bibitem{metivier2001incompressible}
{\sc G.~M{\'e}tivier and S.~Schochet}, {\em The incompressible limit of the
  non-isentropic {E}uler equations}, Arch. Ration. Mech. An., 158 (2001),
  pp.~61--90.

\bibitem{ArNoLuMu12}
{\sc S.~Noelle, G.~Bispen, K.~Arun,
  M.~Luk{\'a}{\v{c}}ov{\'a}-Medvi\v{d}ov{\'a}, and C.-D. Munz}, {\em A weakly
  asymptotic preserving low {M}ach number scheme for the {E}uler equations of
  gas dynamics}, SIAM J. Sci. Comp., 36 (2014), pp.~B989--B1024.

\bibitem{pareschi2005implicit}
{\sc L.~Pareschi and G.~Russo}, {\em Implicit-explicit {R}unge-{K}utta schemes
  and applications to hyperbolic systems with relaxation}, J. Sci. Comp., 25
  (2005), pp.~129--155.

\bibitem{schochet1986compressible}
{\sc S.~Schochet}, {\em The compressible {Euler} equations in a bounded domain:
  existence of solutions and the incompressible limit}, Commun. Comput. Phys.,
  104 (1986), pp.~49--75.

\bibitem{schochet2005mathematical}
\leavevmode\vrule height 2pt depth -1.6pt width 23pt, {\em The mathematical
  theory of low {Mach} number flows}, ESAIM Math. Model. Numer. Anal., 39
  (2005), pp.~441--458.

\bibitem{NoeSch2014}
{\sc J.~Sch\"utz and S.~Noelle}, {\em Flux splitting for stiff equations: A
  notion on stability}, J. Sci. Comp., 64 (2015), pp.~522--540.

\bibitem{Zakerzadeh2016}
{\sc H.~Zakerzadeh}, {\em Asymptotic analysis of the {RS-IMEX} scheme for the
  shallow water equations in one space dimension}, ESAIM Math. Model. Numer.
  Anal., 53 (2019), pp.~893--924.

\bibitem{ZKBSM17}
{\sc J.~Zeifang, K.~Kaiser, A.~Beck, J.~Sch\"utz, and C.-D. Munz}, {\em
  Efficient high-order discontinuous {Galerkin} computations of low {Mach}
  number flows}, Comm. App. Math. Com. Sc., 13 (2018), pp.~243--270.

\bibitem{ZeifangSchuetzBeckLukacovaNoelle2019}
{\sc J.~Zeifang, J.~Sch\"utz, K.~Kaiser, A.~Beck,
  M.~Luk{\'a}{\v{c}}ov{\'a}-Medvi\v{d}ov{\'a}, and S.~Noelle}, {\em A novel
  full-{Euler} low {Mach} number {IMEX} splitting}, Commun. Comput. Phys. (in
  press),  (2019).

\end{thebibliography}

\end{document}